\newtheorem{theoremcounter}{Theorem Counter}[section]
\theoremstyle{remark}
\newtheorem{remark}{Remark}
\theoremstyle{definition}
\newtheorem{definition}[theoremcounter]{Definition}
\newtheorem{lemma}[theoremcounter]{Lemma}
\newtheorem{fact}[theoremcounter]{Fact}
\newtheorem{proposition}[theoremcounter]{Proposition}
\newtheorem{corollary}[theoremcounter]{Corollary}
\newtheorem{theorem}[theoremcounter]{Theorem}
  \newtheoremstyle{mystyle}
    {}
    {}
    {\normalfont}
    {}
    {\bfseries}
    {}
    { }
    {}
\theoremstyle{mystyle}
\newtheorem*{proposition*}{Proposition}
\newtheorem*{corollary*}{Corollary}
\newtheorem*{conjecture*}{Conjecture}
\newtheorem*{Ac}{Acknowledgement}
\newtheorem*{theorem*}{Theorem}
\numberwithin{equation}{section}
\newcommand{\WD}{\mathrm{WD}}
\newcommand{\JL}{\mathrm{JL}}
\newcommand{\sym}{\mathrm{sym}}
\newcommand{\Id}{\mathrm{Id}}
\newcommand{\CM}{\mathcal{M}}
\newcommand{\FL}{\mathrm{FL}}
\newcommand{\e}{\varepsilon}
\newcommand{\cadist}{\mathrm{Irr}_{\mathrm{cusp,A-dist}}}
\newcommand{\gadist}{\mathrm{Irr}_{\mathrm{gen,A-dist}}}
\newcommand{\cedist}{\mathrm{Irr}_{\mathrm{cusp,E-dist}}}
\newcommand{\sdg}{\mathrm{Irr}_{\mathrm{gen,sd}}}
\newcommand{\ord}{\mathrm{ord}}
\newcommand{\A}{\mathbb{A}}
\newcommand{\ad}{\mathrm{Ad}}
\newcommand{\gen}{\mathrm{Irr}_{\mathrm{gen}}}
\newcommand{\symp}{\mathrm{Irr}_{\mathrm{gen,symp}}}
\newcommand{\sympc}{\mathrm{Irr}_{\mathrm{cusp,symp}}}
\newcommand{\Q}{\mathbb{Q}}
\newcommand{\R}{\mathbb{R}}
\newcommand{\C}{\mathbb{C}}
\DeclareMathOperator{\Sp}{Sp}
\DeclareMathOperator{\GL}{GL}
\def\1{\mathbbm{1}}
\def\cusp{\mathrm{Irr}_{\mathrm{cusp}}}
\DeclareMathOperator{\diag}{diag} 
\DeclareMathOperator{\tr}{tr}
\DeclareMathOperator{\Hom}{Hom}
\DeclareMathOperator{\re}{Re}
\DeclareMathOperator{\Irr}{Irr}
\begin{document}

\title[]{Epsilon dichotomy via root numbers of intertwining periods}

\author[]{Nadir Matringe}
\address{Nadir Matringe.  Institute of Mathematical Sciences, NYU Shanghai, 3663 Zhongshan Road North Shanghai, 200062, China and Institut de Math\'ematiques de Jussieu-Paris Rive Gauche, Universit\'e Paris Cit\'e, 75205, Paris, France}
\email{nrm6864@nyu.edu \\ matringe@img-prg.fr}


\subjclass[2020]{}


\maketitle
\renewcommand{\thefootnote}{}
\footnote[0]{\today\ \now}

\renewcommand{\thefootnote}{\arabic{footnote}}
 \setlength{\leftmargini}{20pt}

\begin{abstract}
We give a new proof of the epsilon dichotomy conjecture, stated by Prasad and Takloo-Bighash, for non Archimedean local fields of characteristic zero, when the twisting character is trivial. Our method relies on the functional equation and the analytic properties of intertwining periods, instead of trace formula and type theory. It removes the odd residual characteristic restriction in the previous proof, coming from type theory. 
\end{abstract}

\section{Introduction}

Let $F$ be a a finite extension of $\Q_p$, and $D$ be a central division $F$-algebra of dimension $d^2$. Let $m$ be a positive integer and set $G=G_m=\GL_m(D)$ with a positive integer $m$. Let $E$ be a quadratic extension of $F$, and let $\eta_{E/F}$ be the quadratic character of $F^\times$ with kernel $N_{E/F}(E^\times)$. 
Assume that $n:=md$ is even so that $E$ embeds in $\CM_m(D)$, uniquely up to conjugation according to Skolem-Noether theorem.
Let $H$ denote the centralizer of $E^\times$ in $G$.
We say that a smooth representation $\pi$ of $G$ is \emph{$E$-distinguished} if $\Hom_H(\pi,  \C)\neq 0$. We also say that $\pi$ is \emph{$F\times F$-distinguished} if $m$ is even and $\Hom_{\GL_{m/2}(D)\times \GL_{m/2}(D)}(\pi,  \C)\neq 0$.  
 
When $p>2$, Theorem \ref{conj:PTB} below, generalizing the Saito--Tunnell criterion of \cite{Sai} and \cite{Tu} for local toric periods on $\GL_2(F)$, follows from the works of Suzuki \cite{Sjnt}, Xue \cite{Xue21}, S\'echerre \cite{Sec24}, Suzuki and Xue \cite{SX24}, with an input from \cite{Ch}. In this paper we give a new proof of its cuspidal case, independent of the previous proofs, and which removes the restriction $p>2$ coming from type theory. The general version below then follows from the cuspidal case thanks to \cite{Sjnt}, \cite{SX24}, and \cite{Ch}: we observe that these papers only rely on the Mackey theory, intertwining periods, and globalization arguments, hence so does the proof presented in this paper. 

We refer to \cite[Definition 2.3]{Sign} for the definition of a generic representation of $G$. If $\pi$ is generic, then it has a well-defined Jacquet-Langlands transfer $\JL(\pi)$, which is a generic representation of $\GL_n(F)$, and we define the L-parameter $\phi_\pi$ of $\pi$ by setting \[\phi_\pi:=\phi_{\JL(\pi)}:\WD_F\to \GL_n(\C).\] Let $\psi:F\to \C^\times$ be a non trivial character, and $\psi_E:=\psi\circ \tr_{E/F}$. If $\phi:\WD_F\to \Sp_n(\C)$ is a symplectic L-parameter, we set \[\e_E(\phi):=\eta_{E/F}(-1)^{n/2}\epsilon(1/2,\phi_{|\WD_E},\psi_E)=
\eta_{E/F}(-1)^{n/2}\epsilon(1/2,\phi,\psi)\epsilon(1/2,\eta_{E/F}\otimes\phi,\psi),\] where the $\epsilon$ factor above is the usual Deligne-Langlands epsilon factor as defined in \cite{Tate}. This quantity does not depend on the choice of the character $\psi$ since $\phi$ is symplectic. 

\begin{theorem}\label{conj:PTB}
Let $\pi$ be a generic representation of $G$, and $\phi=\phi_\pi$.
\begin{itemize}
\item[(1)] If $\pi$ is $E$-distinguished, then $\phi$ takes values in $\Sp_n(\C)$.
\item[(2)] Suppose $\phi$ takes values in $\Sp_n(\C)$ and we write it in the following form:
    \[
    \phi = \bigoplus_{i\in I_\phi} \phi_i \oplus \bigoplus_{j\in J_\phi}(\phi_j\oplus\phi_j^\vee),
    \]
where $I_\phi,  J_\phi$ are finite sets and $\phi_k\,\colon WD_F\to\GL_{n_k}(\C)$ is a discrete $L$-parameter such that
\begin{itemize}
\item for each $i\in I_\phi$,  $n_i$ is even and $\phi_i$ takes values in $\Sp_{n_i}(\C)$,
\item for any $i,  i'\in I_\phi$,  we have $\phi_i\not\simeq\phi_{i'}$ if $i\neq i'$. 
\end{itemize}
Then $\pi$ is $E$-distinguished if and only if we have 
    \[
    \e_E(\phi_i) = (-1)^{n_i/d},   \qquad  i\in I_\phi.
    \]
\end{itemize}
\end{theorem}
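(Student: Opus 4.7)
\smallskip

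The overall plan is to reduce Theorem~\ref{conj:PTB} to the case where $\pi$ is cuspidal via the existing arguments in \cite{Sjnt}, \cite{SX24}, \cite{Ch} --- these rely only on Mackey theory, intertwining periods and globalization, and hence are available in any residue characteristic. For the cuspidal case, I would argue entirely locally, avoiding both the trace formula and type theory, by exploiting the functional equation of an intertwining period. Part~(1) for cuspidal $\pi$ is the easier half: $E$-distinction forces $\pi^\sigma \simeq \pi^\vee$ and hence self-duality of $\phi_\pi$, and a parity argument using $\eta_{E/F}$ and the central character of $\pi$ selects the symplectic branch. The substance is part~(2).

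For part~(2), the idea is to embed the cuspidal $\pi$ on $G$ into an induced representation on the larger group $\widetilde G = \GL_{2m}(D)$. Let $P = MN$ be the standard maximal parabolic with Levi $M \simeq G \times G$ and set $\nu = |{\det}|_F$. Form
\[
I(s, \pi) = \Ind_P^{\widetilde G}\bigl( \pi\, \nu^{s/2} \boxtimes \pi\, \nu^{-s/2} \bigr).
\]
Choose an embedding of $E$ into $\M_{2m}(D)$ whose centralizer $\widetilde H \subset \widetilde G$ admits an open orbit on $P \backslash \widetilde G$, with stabilizer $\widetilde H \cap wPw^{-1}$ (for a representative $w$) a parabolic of $\widetilde H$ whose Levi is essentially the diagonal copy of $H$. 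On this open orbit one constructs the intertwining period
\[
J(s, f) = \int_{\widetilde H \cap w P w^{-1} \backslash \widetilde H} f(wh)\, dh, \qquad f \in I(s, \pi),
\]
proves convergence for $\re(s) \gg 0$ and meromorphic continuation in $s$, thereby obtaining a family of $\widetilde H$-equivariant linear forms on $I(s, \pi)$. A geometric lemma / Bernstein-type analysis of the symmetric space $\widetilde G / \widetilde H$ should then link the (non-)vanishing of $J(s, \cdot)$ at the critical point $s_0$ (typically $s_0 = 0$) with the $H$-distinction of $\pi$.

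The crux is then the functional equation
\[
J(s, M(s) f) = \gamma(s, \pi, \psi)\, J(-s, f),
\]
where $M(s) : I(s, \pi) \to I(-s, \pi)$ is the standard intertwining operator. Comparing with the split case via the Jacquet-Langlands correspondence and Langlands-Shahidi type identifications on $\GL_{2n}(F)$, one should express $\gamma(s, \pi, \psi)$ in terms of the Asai and tensor-square $\gamma$-factors attached to $\phi_\pi$; at the critical point this factor should match $\e_E(\phi_\pi)$ up to the correction $(-1)^{n/d}$ arising from the transfer between $G$ and $\GL_n(F)$. Reading the functional equation on both sides of $s_0$ and comparing non-vanishing would then produce, summand by summand, the sign condition $\e_E(\phi_i) = (-1)^{n_i/d}$ on each discrete block of the symplectic decomposition of $\phi_\pi$.

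The hardest step will be the explicit computation of $\gamma(s, \pi, \psi)$ in the division algebra setting and its precise identification with $\e_E(\phi_\pi)$, with careful tracking of the Jacquet-Langlands sign $(-1)^{n/d}$ through the functional equation. Secondary difficulties are the meromorphic continuation and non-vanishing analysis of $J(s, f)$ --- which requires understanding the orbit structure of $\widetilde H$ on $P \backslash \widetilde G$ in the non-split setting --- and the precise geometric link between $\widetilde H$-distinction of $I(s_0, \pi)$ and $H$-distinction of $\pi$, which should come out of a Mackey / geometric lemma argument along the open orbit.
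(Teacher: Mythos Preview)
Your skeleton matches the paper's: reduce to the cuspidal case via \cite{Sjnt}, \cite{SX24}, \cite{Ch}, embed $\pi$ in $I(s,\pi)=\pi[s/2]\times\pi[-s/2]$ on $\GL_{2m}(D)$, build the open $\widetilde H$-intertwining period $J_{\pi,E}(s)$, and read distinction off from its functional equation under $M_\pi(s)$. The paper also proves (via \cite{MOY}) that for cuspidal symplectic $\pi$ the pole of $J_{\pi,E}(s)$ at $s=0$ is \emph{equivalent} to $E$-distinction, and that after normalizing $M_\pi(s)$ by $\gamma(-s,\JL(\pi),\wedge^2,\psi)\gamma(s,\JL(\pi),\sym^2,\psi)$ one has $N_{\pi,\psi}(0)=(-1)^{k_\pi}\Id$, so the normalized proportionality constant $\beta_{\pi,E,\psi}(0)$ is forced to be $+1$ when there is a pole and $-1$ when there is not. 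This clean dichotomy is the engine, and your sketch does not isolate it.

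The genuine gap is your claim that the proportionality constant can be computed ``entirely locally'' by comparison with the split case via Jacquet--Langlands. The paper (using \cite{MOY}) shows that locally one can only obtain
\[
\alpha_{\pi,E}(s)=c(\pi,\psi)\,\frac{\gamma((s+1)/2,\JL(\pi)_E,\psi_E)}{\gamma(-s,\JL(\pi),\wedge^2,\psi)\gamma(s,\JL(\pi),\sym^2,\psi)}
\]
with an \emph{undetermined} nonzero constant $c(\pi,\psi)$; the relevant factors are the base-change standard $\gamma$-factor and the exterior/symmetric square factors, not Asai and tensor-square. Pinning down $c(\pi,\psi)=(-1)^m\eta_{E/F}(-1)^{n/2}$ is done by a local--global argument: one globalizes $E/F$ and $\pi$ to $\mathfrak{E}/\mathfrak{F}$ and a cuspidal automorphic $\Pi$ with symplectic $\JL(\Pi)$ (this uses the recent globalization of Takanashi--Wakatsuki), arranges that $\Pi_v$ is $\mathfrak{E}_v$-distinguished for all $v\neq v_0$, uses the already-proved direct implication at those places to know $c(\Pi_v,\Psi_v)$ there, and concludes from $\prod_v c(\Pi_v,\Psi_v)=1$. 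So the argument is not purely local, and the direct implication is an essential input to the converse. Relatedly, your sketch for part~(1) (``$\pi^\sigma\simeq\pi^\vee$ plus a parity argument'') is not the right mechanism here: $H$ is the centralizer of $E^\times$, not a Galois subgroup, and the inclusion $\gadist(G)\subseteq\symp(G)$ is quoted from \cite{Sign}, while the sign statement $\e_E(\phi_\pi)=(-1)^m$ for distinguished cuspidal $\pi$ is itself proved by globalization.
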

The above statement was predicted by Prasad and Takloo-Bighash in \cite[Conjecture 1]{PTB}, in a slightly more general form involving a character twist. The heart of the conjecture is its cuspidal case although the reduction from the discrete to the cuspidal case, done in \cite{SX24}, uses local intertwining periods in a non trivial way. As written above, the general case follows from the cuspidal case by \cite{Sjnt}, \cite{SX24}, and \cite{Ch}. 

We now review the techniques involved in the previous proof of the conjecture in the cuspidal case. The proof of the direct implication is based on establishing special cases of the direct implication of the Guo--Jacquet conjecture, via the global trace formula approach proposed by Guo in \cite{Guo96}. Once this global conjecture established, the direct implication of the Prasad and Takloo-Bighash conjecture follows from an argument of Prasad in \cite{Pr07}, as we shall recall in Section \ref{sec:direct}. We observe in passing that the direct implication of the Guo-Jacquet conjecture is now a consequence of the recent \cite[Theorem 1.4]{MOY}, established by a totally new method, which can be thought of as a global analogue of the approach developed in this paper. In \cite{FMW}, Feigon, Martin and Whitehouse prove the direct implication of the split cuspidal case of Theorem \ref{conj:PTB}, by establishing a special case of the Guo-Jacquet conjecture. More on the Guo-Jacquet conjecture is further established by Xue in \cite{Xue21}, which provides two proofs of the direct implication in the cuspidal case. Xue also proves the converse implication in the cuspidal case, under the assumption $d\leq 2$, by the global trace formula approach as well. Then, by type theoretic methods, S\'echerre \cite{Sec24} provides a full proof of the converse implication, under the assumption that $p\neq 2$, which we remove here with our new proof. One common point of our proof of the converse implication and that of S\'echerre, probably the only one, is that they rely on the direct implication.  

Theorem \ref{conj:PTB} should also follow from a local trace formula approach proving a special case of the conjecture in \cite{Wan}, as explained and started by Suzuki in \cite{Strace}. The advantage of this approach is that the dichotomy conjecture is put into a very general framework. On the other hand it seems to be expected that the remaining open problems there are technically highly challenging. 

Our method here looks much simpler, and also very natural. However, it uses two local statements from \cite{MOY}, the proof of which is quite technical. 

The statement of Theorem \ref{conj:PTB} also makes sense over Archimedean local fields, and its more general version involving the character twist was recently settled by Tamori and Suzuki \cite{ST23}.  

As a next step, following a strategy explained to us by Miyu Suzuki, we intend to deduce the conjecture for non Archimedean local fields of odd positive characteristic, via the theory of close local fields.

Let's discuss and summarize the proof of the converse implication of Theorem \ref{conj:PTB} in the cuspidal case. It was observed in \cite{MatJFA} that the functional equation of local intertwining periods contains much information about distinction of quotients of induced representations, and this was used in \cite{SX24} to reduce the discrete case of Theorem \ref{conj:PTB} to its cuspidal case. Our main and new discovery, is that this functional equation actually also encodes most of the information about the distinction of cuspidal representations. Namely, let $\pi$ be a cuspidal representation of $G$. It is well-known to experts that $\pi$ is $E$-distinguished if and only if the open intertwining period \[J_{\pi,E}(s):\pi[s/2]\times \pi^\vee[-s/2]\to \C\]  has a pole at $ s=0$  (see Section \ref{sec:signFE} for this fact, and Section \ref{sec:OIP} for unexplained notations). 
Now assume that $\pi$ has a symplectic Langlands parameter, so that in particular $\pi=\pi^\vee$. Then by multiplicity at most one, we have a functional equation 
\[J_{\pi,E}(-s)\circ M_{\pi}(s)=\alpha_{\pi,E}(s) J_{\pi,E}(s),\] where 
\[M_{\pi}(s):\pi[s/2]\times \pi[-s/2]\to \pi[-s/2]\times \pi[s/2]\] is the standard intertwining operator. One first main observation, proved in Theorem \ref{thm:key1}, is that if we normalize $\alpha_{\pi,E}(s)$ by the Shahidi exterior and symmetric square gamma factors as \[\beta_{\pi,E,\psi}(s)=\gamma(-s,\JL(\pi),\wedge^2,\psi)\gamma(s,\JL(\pi),\sym^2,\psi)\alpha_{\pi,E}(s),\] then $J_{\pi,E}(s)$ has a pole at $s=0$ if and only if $\beta_{\pi,E,\psi}(0)=1$, in other words:
\begin{equation}\label{eq:iff} \pi \mbox{ is distinguished} \iff \beta_{\pi,E,\psi}(0)=1 .\end{equation} It remains to compute $\beta_{\pi,E,\psi}(0)$, which we do by a local to global argument in Section \ref{sec:conv}. In order to achieve this, in Lemma \ref{lm:globlz}, we globalize $E/F$ as the completion at an inert place $v_0$ of a quadratic extension $\mathfrak{E}/\mathfrak{F}$ of number fields, and with the help of new results of Takanashi and Wakatsuki \cite{TW}, we globalize $\pi$ as the local component $\pi=\Pi_{v_0}$ of a cuspidal automorphic representation $\Pi$ with symplectic Jacquet-Langlands transfer, in such a way that $\Pi_v$ is moreover $\mathfrak{E}_v$-distinguished for all places $v\neq v_0$ of $\mathfrak{F}$. Then, in order to put the globalization argument to good use, we prove in Proposition \ref{prop:sign dist} that the direct implication in \eqref{eq:iff} holds as soon as $\pi$ is generic and $G$ is $F$-split, even if we  allow $E=F\times F$ (see Sections \ref{sec:OIP} and \ref{sec:signFE} for the precise meaning of this latter assertion). From this we deduce a formula for $\beta_{\Pi_v,\mathfrak{E}_v,\Psi_v}(0)$ for all places $v\neq v_0$ of the number field globalizing $F$ in Corollary \ref{cor:key}. The combination of Proposition \ref{prop:sign dist} and Corollary \ref{cor:key}, i.e. the formula for $\beta_{\Pi_v,E_v,\Psi_v}(0)$, is our second main observation, and we appeal to the main local result of \cite{MOY} in its proof, as well as to the direct implication of Theorem \ref{conj:PTB}. Finally, the product of all $\beta_{\Pi_v,E_v,\Psi_v}(0)$ including $\beta_{\pi,E,\psi}(0)$ being equal to one, as we prove in Lemma \ref{lm:product of c is one} thanks to another result of \cite{MOY}, we deduce in Theorem \ref{thm:key} the expected formula for $\beta_{\pi,E,\psi}(0)$. The final formula that we obtain is that if $\pi$ is cuspidal with symplectic L-parameter:
\[\beta_{\pi,E,\psi}(0)=(-1)^m\e_E(\phi_\pi).\] The converse implication of the cuspidal case of Theorem \ref{conj:PTB} follows; it is Corollary \ref{cor:main}.

\begin{Ac}
We thank Miyu Suzuki for sharing her insights and notes on the positive characteristic case, which served as an impetus for completing the proof in characteristic zero. The idea of the proof occurred to the author when he was visiting Huajie Li at the YMSC of Tsinghua university. We thank him and the YMSC for the hospitality. 
\end{Ac}

\section{Notation}\label{sec:not}

Let $F$ be a local field of characteristic zero, possibly Archimedean. We fix a nontrivial character $\psi:F\to \C^\times$.  Let $A$ be an $F$-separable algebra of dimension two, i.e. either $A/F$ is a quadratic extension, of $A\simeq F\times F$. We set $\psi_A:=\psi\circ \tr_{A/F}$, and denote by $\eta_{A/F}$ the character of $F^\times$ with kernel the norms of $A^\times$, in particular $\eta_{A/F}$ has order two when $A/F$ is quadratic, whereas it is trivial when $A\simeq F\times F$. We note that $A$ can be written $F[u]$, with $\tr_{A/F}(u)=0$, i.e. $u^2\in F$. Such an element $u$ is unique up to scaling by an element in $F^\times$. Let $D$ be a central division algebra over $F$, such that $[D:F]=d^2$ with $d\geq 1$. We set $\CM:=\CM_m(D)$ for $m\geq 1$, and $n:=md$. We put $\psi_{\CM}:=\psi\circ \tr_{\CM/F}$. We assume that $A$ embeds in $\CM$ as an $F$-subalgebra, in such a way that $\tr_{\CM/F}(u)=0$, and we realize the embedding explicitly, as explained in \cite[Section 2.5]{MOY}. In particular $n$ is even, and if $A/F$ is quadratic, this parity condition is also sufficient for the assumption $\tr_{\CM/F}(u)=0$ to hold. If $A\simeq F\times F$, the assumption holds if and only if $m$ is even and $u$ is conjugate to $\lambda\diag(I_{m/2},-I_{m/2})$ with $\lambda\in F^\times$. We set $G_m=G=\CM^\times$, and $H_{A,m}=H_A=Z_{\CM}(A)^\times$. The group $H_A$ is the subgroup of $G$ fixed by the inner involution $\theta:=\ad(u)$ of $G$. We denote by $\nu:G\to \C^\times$ the restriction of the composition of the normalized absolute value $|\ |$ on $F$ with the reduced norm on $\CM$. We denote by $P_{m,m}$ the block upper triangular parabolic subgroup of $G_{2m}$ corresponding to the partitition $(m,m)$, and by $M_{m,m}$ its block diagonal Levi subgroup. We denote by $x_0$ the representative of the unique open double coset $Px_0H$ fixed in \cite[Section 4.1]{MOY}. In particular the involution $\theta_o(g):=x_o\theta(x_o^{-1}gx_o)x_o^{-1}$ of $G_{2m}$ is such that $\theta_o(P_{m,m})$ is opposite to $P_{m,m}$ with respect to $M_{m,m}$. We denote by $\FL(G)$ the objects of the category of smooth admissible representations of $G$ of finite length, where admissible means admissible (yes) but also Fr\'echet and of moderate growth, when $F$ is Archimedean (see \cite[Chapter 11]{RRG2}). For $\pi\in \FL(G)$, we write $\pi^\vee$ for its smooth contragredient, and we define  $\ell\in \Hom(\pi\otimes \pi^\vee,\C)$ by 
\[\ell(v\otimes v^\vee)=\langle v, v^\vee \rangle\] for $v \in \pi,\ v^\vee \in \pi^\vee$. We use the Bernstein-Zelevinsky product notation $\times$ for normalized parabolic induction. 

\section{Generic and distinguished representations}\label{sec:gendist}

An element $\pi\in \Irr(G)$ is called generic if it satisfies the conditions of \cite[Definition 2.3]{Sign}. We denote by $\gen(G)$ the set of isomorphism classes of generic representations of $G$. In particular, for $\pi\in \gen(G)$, its Jacquet-Langlands transfer $\JL(\pi)\in \gen(\GL_n(F))$ is well-defined. Let $\WD_F$ be the Weil-Deligne group of $F$. By definition, the L-parameter 
\[\phi_\pi:\WD_F\to \GL_n(\C)\] of $\pi$ is that of its Jacquet-Langlands transfer $\JL(\pi)$ (\cite{HT},\cite{Hen},\cite{DKV}), and we say that $\pi$ is symplectic if $\phi_\pi(\WD_F)\subseteq \Sp_n(\C)$. We denote by $\symp(G)$, resp. $\sdg(G)$, the subset of $\gen(G)$, the elements of which are symplectic, resp. self-dual; $\pi\in \sdg(G)$ if $\pi^\vee= \pi$. In particular $\symp(G)\subseteq \sdg(G)$. When $F$ is $p$-adic and only in this case, we set $\sympc(G)$ to be the cuspidal part of $\symp(G)$. In particular in many statements below, if we mention cuspidal representations, it will be implicit that for this part of the statement, the field $F$ has to be $p$-adic. 

We say that $\pi\in \gen(G)$ is \textit{$H_A$-distinguished}, or simply \textit{$A$-distinguished} if $\Hom_{H_A}(\pi,\C)\neq \{0\}$. We denote by $\gadist(G)$ the subset of $\gen(G)$, the element of which are $A$-distinguished. 
As follows from \cite[Theorem 1.1]{Sign}, which relies on many previous results, we have
 \[\gadist(G)\subseteq \symp(G),\] and moreover $\Hom_{H_A}(\pi,\C)\simeq \C$ whenever $\pi\in \gadist(G)$. The above inclusion is actually an equality when $A\simeq F\times F$ and $G=\GL_n(F)$ according to \cite[Corollary 3.15]{MatCrelle}, \cite[Appendix D]{Sign} and \cite[Theorem 5.4]{MOYjfa}. We set $\cadist(G)$ to be the cuspidal part of $\gadist(G)$.

\section{Normalized intertwining operators}\label{sec:NIO}

 For $s$ a complex number and $\pi\in \FL(G)$, we set $\pi[s]:=\nu^{s/2}\otimes \pi$. If $\pi$ has a central character, we denote it by $\omega_\pi$. There is a standard meromorphic intertwining operator 
\[M_\pi(s):\pi[s/2]\times \pi[-s/2]\to \pi[-s/2]\times \pi[s/2],\] the basic properties of which are for example recalled in \cite[2.2.2]{MOY}.
The definition of such an operator necessitates the choice of a Haar measure on $D$, and we take the $\psi_D$-selfdual Haar measure. When $\pi\in \gen(G)$, we have at our disposal the exterior and symmetric square local constants attached to $\JL(\pi)$, as well as the the local constants attached to the pair $(\JL(\pi),\JL(\pi))$. They can be  defined either by the Langlands-Shahidi method or via the Artin local constants of their Langlands parameters. They are all known to agree in particular thanks to \cite {Hext} and \cite{CST}. We have the equalities 

\begin{equation}\label{eq:L} L(s,\JL(\pi),\JL(\pi))=L(s,\JL(\pi),\wedge^2)L(s,\JL(\pi),\sym^2)\end{equation}

and

\begin{equation}\label{eq:gamma}\gamma(s,\JL(\pi),\JL(\pi),\psi)=\gamma(s,\JL(\pi),\wedge^2,\psi)\gamma(s,\JL(\pi),\sym^2,\psi).\end{equation}

We set \[k_{\pi}:=\ord(s=0,\ L(s,\JL(\pi),\wedge^2))\] to be the order of the pole of $L(s,\JL(\pi),\wedge^2)$ at $s=0$. We warn the reader that the normalization of $M_\pi(s)$ below is not the usual Langlands-Shahidi normalization.
 
\begin{definition}
For $\pi\in \gen(G)$, we set \[N_{\pi,\psi}(s):=\gamma(-s,\JL(\pi),\wedge^2,\psi)\gamma(s,\JL(\pi),\sym^2,\psi)M_\pi(s).\]
\end{definition}

The property that we need from this normalization is the following result. 

\begin{proposition}\label{prop Shahidi}
Assume that either $\pi\in \cusp(G)$ or that $G=\GL_n(F)$ is split and that $\pi\in \gen(G)$. In both cases assume moreover that $\omega_{\pi}$ is trivial and that $\pi=\pi^\vee$. Then 
\[N_{\pi,\psi}(0)=(-1)^{k_\pi} \Id_{\pi\times \pi}.\]
\end{proposition}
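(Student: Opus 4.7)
The plan is to reduce the computation of $N_{\pi,\psi}(0)$ to a scalar via irreducibility of $\pi\times\pi$, establish the involutive functional equation $N_{\pi,\psi}(s)\circ N_{\pi,\psi}(-s)=\Id$ to pin down this scalar up to a sign, and then identify the sign by comparison with the Langlands--Shahidi normalization.

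First, $\pi\times\pi$ is irreducible. In the cuspidal case this follows from the Bernstein--Zelevinsky irreducibility criterion in its $\GL_m(D)$-version due to M\'inguez--S\'echerre, since two copies of the same segment are not linked. In the generic split case, the Zelevinsky multi-segment of $\pi$ is pairwise unlinked by genericity, and equal segments are by convention not linked, so the doubled multi-segment of $\pi\times\pi$ remains pairwise unlinked. Any self-intertwiner of $\pi\times\pi$ is therefore a scalar, so once $N_{\pi,\psi}(s)$ is shown to be regular at $s=0$---which follows from Shahidi's description of the order of $M_\pi(s)$ at $s=0$ in terms of the pole orders of $L(s,\JL(\pi),\wedge^2)$ and $L(s,\JL(\pi),\sym^2)$, exactly compensated by the chosen normalizing gamma factors---$N_{\pi,\psi}(0)$ is automatically a scalar.

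Next I would prove $N_{\pi,\psi}(s)\circ N_{\pi,\psi}(-s)=\Id$ starting from the standard Plancherel-measure identity
\[M_\pi(s)\circ M_\pi(-s)=\bigl(\gamma(s,\JL(\pi)\times\JL(\pi)^\vee,\psi)\gamma(-s,\JL(\pi)\times\JL(\pi)^\vee,\psi)\bigr)^{-1}\Id.\]
Applying \eqref{eq:gamma} and $\pi=\pi^\vee$, the right-hand scalar factors into the four gamma factors $\gamma(\pm s,\wedge^2,\psi)$ and $\gamma(\pm s,\sym^2,\psi)$, which cancel exactly against the four gamma factors in $N_{\pi,\psi}(s)N_{\pi,\psi}(-s)$. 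Specializing at $s=0$ yields $N_{\pi,\psi}(0)^2=\Id$, hence $N_{\pi,\psi}(0)=\pm\,\Id_{\pi\times\pi}$ by the scalarity step.

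To identify the sign as $(-1)^{k_\pi}$, I would compare $N_{\pi,\psi}(s)$ with the Langlands--Shahidi normalized operator $R_{\pi,\psi}(s)$, which is known to satisfy $R_{\pi,\psi}(0)=\Id$ by Shahidi's theorem on local coefficients for self-dual generic representations. The ratio $N_{\pi,\psi}(0)/R_{\pi,\psi}(0)$ evaluates to an expression in the leading coefficients of $L$- and $\epsilon$-factors for $\wedge^2$ and $\sym^2$ at $s=0$, whose value is determined by the pole order $k_\pi$ of $L(s,\JL(\pi),\wedge^2)$ at $s=0$ together with the identity $\epsilon(0,\JL(\pi)\times\JL(\pi),\psi)=1$ (which uses $\pi=\pi^\vee$ and triviality of the central character). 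This is where the main obstacle lies: while the functional equation already gives $N_{\pi,\psi}(0)=\pm\,\Id$, extracting the precise power $(-1)^{k_\pi}$---and explaining why the asymmetric choice $\gamma(-s,\wedge^2,\psi)\gamma(s,\sym^2,\psi)$ in the normalization is exactly what produces this clean answer rather than, say, a combination involving the $\sym^2$ pole order---requires careful reconciliation of Langlands--Shahidi and Tate conventions, and is essentially a reformulation of Shahidi's classical sign computations for self-dual generic representations.
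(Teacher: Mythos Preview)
Your outline has two genuine gaps.

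\textbf{The sign determination is not done.} Steps 1--3 reduce to $N_{\pi,\psi}(0)=\pm\Id$, but Step 4 is where the entire content of the proposition lies, and you do not carry it out. You say you would compare with the Langlands--Shahidi normalized operator $R_{\pi,\psi}$ and invoke $R_{\pi,\psi}(0)=\Id$, but then concede that extracting $(-1)^{k_\pi}$ ``requires careful reconciliation'' and is ``essentially a reformulation of Shahidi's classical sign computations''. That reconciliation \emph{is} the proof. In fact the detour through $\pm 1$ buys nothing: the comparison you would need in Step 4 is precisely the ratio $\gamma(-s,\JL(\pi),\wedge^2,\psi)/\gamma(s,\JL(\pi),\wedge^2,\psi)$ evaluated at $s=0$, which one computes directly to be $(-1)^{k_\pi}$ using that $L(1-s,\JL(\pi),\wedge^2)$ is holomorphic and nonzero at $s=0$ (by genericity of $\JL(\pi)$, via \eqref{eq:L}) while $L(s,\JL(\pi),\wedge^2)$ has a pole of order $k_\pi$ there. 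The paper does exactly this: in the split case it applies Shahidi's Whittaker functional $W_{\pi,\psi}(s)$, which is entire and everywhere nonzero, and uses $W_{\pi,\psi}(-s)\circ M_\pi(s)=\gamma(s,\pi,\pi,\psi)^{-1}W_{\pi,\psi}(s)$ to obtain
\[
W_{\pi,\psi}(-s)\circ N_{\pi,\psi}(s)=\frac{\gamma(-s,\pi,\wedge^2,\psi)}{\gamma(s,\pi,\wedge^2,\psi)}\,W_{\pi,\psi}(s),
\]
and then evaluates at $s=0$. This gives the scalar immediately, with no preliminary reduction to $\pm1$.

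\textbf{The non-split cuspidal case is not covered by your references.} When $G=\GL_m(D)$ with $D\neq F$, there is no Whittaker model, so your appeals to ``Shahidi's description of the order of $M_\pi(s)$'' and ``Shahidi's theorem on local coefficients for self-dual generic representations'' do not apply as stated. The paper handles this case separately: it invokes \cite[Lemma 2.1]{AC} to see that $\dfrac{\epsilon(s,\JL(\pi),\JL(\pi),\psi)L(s+1,\JL(\pi),\JL(\pi))}{L(s,\JL(\pi),\JL(\pi))}M_\pi(s)$ is holomorphic and unitary at $s=0$, and Olshanski's result that it is a positive scalar operator, hence the identity. Equivalently $\gamma(s,\JL(\pi),\JL(\pi),\psi)M_\pi(s)$ is the identity at $s=0$, and then the same $\wedge^2$-ratio computation as above finishes. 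Your proposal would need this input (or an equivalent) to treat the inner-form case.
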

\begin{proof}
First we assume that $G$ is split and that $\pi$ is generic. In this case our claim follows from the results of Shahidi. Let $W_{\pi,\psi}(s)$ the $\psi$-Whittaker functional defined on 
$\pi[s/2]\times \pi[-s/2]$ in \cite[Proposition 3.1]{Shcertain} and \cite[p. 987, Corollary]{Shreal}. It is entire with respect to $s$, and nonzero for any given $s\in \C$. Moreover, since $\omega_{\pi}$ is trivial, it follows from \cite[Theorem 3.5]{Shproof} that 
\[W_{\pi,\psi}(-s)\circ M_\pi(s)=\gamma(s,\pi,\pi,\psi)^{-1}W_{\pi,\psi}(s),\] i.e. 
\begin{equation}\label{eq:sh} W_{\pi,\psi}(-s)\circ N_{\pi,\psi}(s)=\frac{\gamma(-s,\pi,\wedge^2,\psi)}{ \gamma(s,\pi,\wedge^2,\psi)}W_{\pi,\psi}(s)\end{equation} according to Equation \eqref{eq:gamma}. But by assumption $\pi$ is generic, so the pair L-function $L(s,\pi,\pi)=L(s,\pi,\pi^\vee)$ is holomorphic at $s=1$, and $L(s,\pi,\wedge^2)$ as well according to Equation \eqref{eq:L}. Since  \[\gamma(s,\pi,\wedge^2,\psi)=\epsilon(s,\pi,\wedge^2,\psi)\frac{L(1-s,\pi,\wedge^2)}{L(s,\pi,\wedge^2)},\] the result now follows from evaluating Equation \eqref{eq:sh} at $s=0$. 

It remains to treat the case where $\pi\in \cusp(G)$ and $G$ is not necessarily split. In this case it follows from \cite[Lemma 2.1]{AC} that the operator 
\[\frac{\epsilon(s,\JL(\pi),\JL(\pi),\psi)L(s+1,\JL(\pi),\JL(\pi))}{L(s,\JL(\pi),\JL(\pi))}M_\pi(s)\] is holomorphic and unitary at $s=0$, whereas it follows from \cite{Olsh}, as recalled in \cite[Theorem A.2]{LM}, that it is a positive scalar operator, hence the identity. Equivalent $\gamma(s,\JL(\pi),\JL(\pi),\psi)M_\pi(s)$ gives the identity of $\pi\times \pi$  when evaluated at $s=0$, and the result now follows as in the split generic case, since $\JL(\pi)$ is generic. 
\end{proof}

\section{Open intertwining periods}\label{sec:OIP}

There is a relative version of the above intertwining operator, attached to symmetric pairs, which we call open intertwining periods here. Let $\pi\in \FL(G)$, and $f_s\in \pi[s/2]\times \pi^\vee[-s/2]$ be a holomorphic section. It follows from \cite{BD}, \cite{BrD}, and \cite{CrD} that the integral  
\[J_{\pi,A}(s,f_s):=\int_{x_0^{-1}P_{m,m}x_o\cap H_{A,2m}\backslash H_{A,2m}}\ell(f_s(x_0 h))d\mu(h)\] converges absolutely for $\re(s)>r_{\pi}\in \R$ only depending on $\pi$, and defines a meromorphic family, in a suitable sense, of $H_{A,2m}$-invariant linear forms on $\pi[s/2]\times \pi^\vee[-s/2]$. We refer to \cite[Section 5]{MOY} for more details. In the present work, the choice of the invariant measure $\mu$ does not matter, since such a measure shows up on both sides of the functional equation \eqref{eq:lipfe} below.

Assume further that $\pi\in \sdg(G)$. By multiplicity one for the pair $(G_{2m},H_{A,2m})$, the intertwining period $J_{\pi,A}(s)$ satisfies a functional equation: 
\begin{equation}\label{eq:lipfe}
J_{\pi,A}(-s,M_{\pi}(s)f_s)=\alpha_{\pi,A}(s)J_{\pi,A}(s,f_s),
\end{equation} 
where $f_s$ is a holomorphic section of $\pi[s/2]\times \pi[-s/2]$, and $\alpha_{\pi,A}(s)$ is meromorphic in the variable $s$. Here we tacitly identified $\ell$ with a linear form on $\pi\times \pi$, via the choice of an isomorphism between $\pi$ and $\pi^\vee$. Nevertheless, we observe that since this identification has been made on both sides of the functional equation, the proportionality constant $\alpha_{\pi,A}(s)$ does not depend on it. 

We will crucially use two important results from \cite{MOY}. The first one is an expression of $\alpha_{\pi,A}(s)$ up to some nonzero constant, whereas the second is the determination of the order of the pole at $s=0$ of $J_{\pi,A}(s)$ when $\pi$ is $A$-distinguished. For $\pi\in \gen(G)$, we set  \[\gamma(s,\JL(\pi)_A,\psi_A)=\gamma(s,\phi_\pi,\psi)\gamma(s,\eta_{A/F}\otimes \phi_\pi,\psi)\] and \[L(s,\JL(\pi)_A)=L(s,\phi_\pi)L(s,\eta_{A/F}\otimes \phi_\pi).\] 

\begin{theorem}\label{thm:almost explicit constant}
Let $\pi\in \sdg(G)$, then there exists $c_A(\pi,\psi)\in \C^\times$ such that 
\[\alpha_{\pi,A}(s)=c_A(\pi,\psi)\frac{\gamma((s+1)/2,\JL(\pi)_A,\psi_A)}{\gamma(-s,\JL(\pi),\wedge^2,\psi)\gamma(s,\JL(\pi),\sym^2,\psi)}.\]
Moreover, if $G$ is $F$-split, $A/F$ is unramified, and $\pi$ and $\psi$ are  unramified too, then $c_A(\pi,\psi)=1$. 
\end{theorem}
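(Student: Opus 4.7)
The plan is to deduce Theorem \ref{thm:almost explicit constant} from the local functional equation of an Asai-type Rankin-Selberg integral. Existence of $\alpha_{\pi,A}(s)$ as a meromorphic function of $s$ is essentially automatic: the meromorphic continuation of $J_{\pi,A}(s,-)$ from \cite{BD,BrD,CrD}, together with multiplicity one on the open double coset $P_{m,m}x_0H_{A,2m}$ for self-dual $\pi$, forces the two meromorphic families $f_s\mapsto J_{\pi,A}(-s,M_\pi(s)f_s)$ and $f_s\mapsto J_{\pi,A}(s,f_s)$ to be proportional, as in Equation \eqref{eq:lipfe}.

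The heart of the argument is to pin down the shape of $\alpha_{\pi,A}(s)$. My plan is to unfold $J_{\pi,A}(s,f_s)$ into a zeta-type integral on $\JL(\pi)$: in the Galois case where $A/F$ is quadratic, this is a Flicker-Rallis integral; in the split case $A\simeq F\times F$, it is a Friedberg-Jacquet integral. Both are known to represent the Asai-type L-function $L(s,\JL(\pi)_A)=L(s,\phi_\pi)L(s,\eta_{A/F}\otimes\phi_\pi)$, and both satisfy a local functional equation whose proportionality is $\gamma((s+1)/2,\JL(\pi)_A,\psi_A)$ up to an entire, everywhere nonzero factor; the shift from $s$ to $(s+1)/2$ reflects the half-sum of roots attached to $P_{m,m}$ through the induction $\pi[s/2]\times\pi[-s/2]$. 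Substituting these unfolded expressions into both sides of Equation \eqref{eq:lipfe} and converting $M_\pi(s)$ to its Shahidi normalization via Equation \eqref{eq:gamma} and the factor $\gamma(-s,\JL(\pi),\wedge^2,\psi)\gamma(s,\JL(\pi),\sym^2,\psi)$ yields the claimed expression, with $c_A(\pi,\psi)$ absorbing the ratio of the nonzero entire factors from the two unfoldings.

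For the unramified assertion, I would plug the spherical section $f_s^\circ$ into both sides: the Gindikin-Karpelevich formula computes $M_\pi(s)f_s^\circ$ in terms of unramified L-ratios, while the spherical function computation for the symmetric space $H_A\backslash G_{2m}$ computes $J_{\pi,A}(s,f_s^\circ)$ as a ratio of unramified L-factors of $L(\cdot,\JL(\pi)_A)$. Comparing with the proposed formula for $\alpha_{\pi,A}(s)$ and using the compatibility of unramified gamma factors with their L-factor ratios, all nontrivial factors cancel, leaving $c_A(\pi,\psi)=1$ once the $\psi_D$-selfdual measure on $\CM$ and the induced measure on $H_A$ are normalized in the standard unramified way.

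The main obstacle is the middle step. Three things must be matched simultaneously in a single unfolding: the correct half-integer shift $(s+1)/2$ in the Asai gamma factor; the splitting $\JL(\pi)_A=\phi_\pi\oplus(\eta_{A/F}\otimes\phi_\pi)$ coming from the embedding $A\hookrightarrow\CM$; and Shahidi's normalization of $M_\pi(s)$ via symmetric and exterior square gamma factors rather than the more natural Rankin-Selberg $\gamma(s,\JL(\pi),\JL(\pi),\psi)$. Reconciling all three in one coherent unfolding, even up to an arbitrary nonzero constant, is the technical core of the proof, and is the reason we import the statement from \cite{MOY} rather than reproduce its derivation here.
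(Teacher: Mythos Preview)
Your proposal and the paper's proof ultimately coincide: both import the result from \cite[Theorem 6.6 and Corollary 6.7]{MOY} rather than reprove it. In that sense your proof is correct and matches the paper.

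That said, the heuristic sketch you give before deferring to \cite{MOY} contains a genuine conceptual slip worth flagging. You propose to ``unfold $J_{\pi,A}(s,f_s)$ into a zeta-type integral on $\JL(\pi)$,'' invoking Flicker--Rallis or Friedberg--Jacquet integrals. But $J_{\pi,A}$ lives on $G_{2m}=\GL_{2m}(D)$, while the Flicker--Rallis and Friedberg--Jacquet integrals are constructions on split $\GL_n(F)$ or $\GL_n(E)$; the Jacquet--Langlands correspondence is a bijection of representations defined via character relations, not an integral transform, so there is no mechanism by which a period integral on $\GL_{2m}(D)$ ``unfolds'' to an integral on $\JL(\pi)$. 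When $G$ is split and $\JL(\pi)=\pi$, some unfolding argument is more plausible, but even there the open intertwining period $J_{\pi,A}$ is not literally a Flicker--Rallis or Friedberg--Jacquet integral, and the identification requires work (this is part of what \cite{MOY} does). The appearance of $\JL(\pi)$ in the gamma factors comes from the \emph{definition} $\phi_\pi:=\phi_{\JL(\pi)}$, not from any unfolding into a split-group integral. Since you explicitly cite \cite{MOY} for the actual argument, this does not damage your proof, but the sketch as written would not stand on its own for non-split $G$.
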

\begin{proof}
This is a consequence of the proof of \cite[Theorem 6.6 and Corollary 6.7]{MOY} (\cite[Proof of Proposition 4.7]{SX24} for a special case). 
\end{proof}

\begin{remark}
From the way that \cite[Theorem 6.6 and Corollary 6.7]{MOY} are stated, we can a priori only say that $c(\pi,\psi)$ depends on $s$, in such a way that $c_A(s,\pi,\psi)$ is nonvanishing at $s=0$ (we can actually claim that it is entire and non vanishing). At any rate, this would be enough for our purpose in this paper. However a more careful inspection of the proof of [ibid.] shows that $c_A(s,\pi,\psi)$ is indeed a constant.
\end{remark}

\begin{theorem}\label{thm:pole order lip}
Assume that either $G=\GL_n(F)$ is split and $\pi\in \gadist(G)$, or that $\pi\in \cadist(G)$ (in which case $F$ is $p$-adic by assumption as we already explained), then \[\ord(s=0,J_{\pi,A}(s))=k_{\pi}.\]
\end{theorem}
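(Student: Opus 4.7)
The plan is to establish matching inequalities $\ord(s=0,J_{\pi,A}(s))\geq k_\pi$ and $\ord(s=0,J_{\pi,A}(s))\leq k_\pi$ by separate arguments, using distinction for the lower bound and an $L$-factor normalization for the upper bound.

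For the lower bound, I would use the distinction hypothesis: since $\pi\in\gadist(G)$ or $\cadist(G)$, the multiplicity one results recalled in Section \ref{sec:gendist} give $\dim\Hom_{H_A}(\pi,\C)=1$. Unfolding $J_{\pi,A}(s,f_s)$ via the Bruhat decomposition of $G_{2m}$ relative to $(P_{m,m},H_{A,2m})$ around the open orbit representative $x_0$, for sections $f_s$ built from matrix coefficients of $\pi$, should yield an expression
\[
J_{\pi,A}(s,f_s)=\int_{H_A^0\backslash H_{A,2m}}\langle\pi(h)v,v^\vee\rangle\,\nu(h)^{s+1/2}\,dh
\]
up to explicit factors, where $H_A^0=x_0^{-1}P_{m,m}x_0\cap H_{A,2m}$. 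This integral is a local Flicker-Rallis/Asai-type zeta integral attached to $\JL(\pi)_A$; combined with the self-duality of $\pi$ and the factorization \eqref{eq:L}, its pole at $s=0$ is controlled by that of $L(s,\JL(\pi),\wedge^2)$, and the existence of a non-zero $H_A$-invariant form on $\pi$ ensures that the leading pole is actually attained, giving the lower bound $k_\pi$.

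For the upper bound, I would argue that $L(s,\JL(\pi),\wedge^2)^{-1}\cdot J_{\pi,A}(s,f_s)$ extends holomorphically to a neighborhood of $s=0$ for every holomorphic section $f_s$. In the cuspidal case, matrix coefficients of $\pi$ are compactly supported modulo center, which makes the absolute convergence of the unfolded integral tractable for $\re(s)>r_\pi$; its meromorphic continuation should have its only singularities at $s=0$ accounted for by the $L$-factor appearing in the Shahidi-type denominator. The split generic case is handled analogously via Whittaker models and the Jacquet-Piatetski-Shapiro-Shalika theory, using generic vanishing of Whittaker asymptotics to control the residue.

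The main obstacle, and the reason this paper cites the result directly from \cite{MOY}, is the exact matching of the $L$-factor: a priori the $L$-function naturally arising from the unfolded zeta integral is $L(s,\JL(\pi)_A)$ (an Asai-type object), and isolating the precise $\wedge^2$-contribution requires a careful bookkeeping of the self-duality structure of $\pi$ together with the interplay between the open orbit geometry, the involution $\theta_o$, and the parameter $\phi_\pi$. Rather than redoing this detailed analysis, I would invoke the machinery of \cite{MOY} to carry through the identification and produce the equality $\ord(s=0,J_{\pi,A}(s))=k_\pi$ as stated.
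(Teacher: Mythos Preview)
The paper's own proof is a single sentence: ``This is a special case of \cite[Theorem 6.8]{MOY}.'' Your proposal ultimately lands in the same place, since in the final paragraph you concede that the matching of $L$-factors cannot be done by hand and you would ``invoke the machinery of \cite{MOY}''. In that sense the two agree.

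That said, the heuristic sketch you offer before the citation contains genuine inaccuracies, and it would be misleading to present it as an outline of the actual argument. The displayed ``unfolded'' integral
\[
\int_{H_A^0\backslash H_{A,2m}}\langle\pi(h)v,v^\vee\rangle\,\nu(h)^{s+1/2}\,dh
\]
is ill-formed: $\pi$ is a representation of $G_m$, not of $H_{A,2m}\subseteq G_{2m}$, so $\pi(h)$ has no meaning for $h\in H_{A,2m}$. The open intertwining period $J_{\pi,A}(s)$ is already \emph{defined} as an integral of $\ell(f_s(x_0h))$ over $H_A^0\backslash H_{A,2m}$; there is no further ``Bruhat unfolding'' that turns it into a matrix-coefficient integral of the inducing datum. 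Likewise, describing the resulting integral as a ``Flicker--Rallis/Asai-type zeta integral attached to $\JL(\pi)_A$'' conflates two different objects: the factor $L(s,\JL(\pi)_A)$ governs the zeros of $\gamma((s+1)/2,\JL(\pi)_A,\psi_A)$ appearing in the \emph{functional equation} (Theorem~\ref{thm:almost explicit constant}), whereas the pole order $k_\pi$ is by definition that of $L(s,\JL(\pi),\wedge^2)$, and the two are not interchangeable. Finally, the upper bound you suggest, that $L(s,\JL(\pi),\wedge^2)^{-1}J_{\pi,A}(s)$ is automatically holomorphic near $s=0$, is precisely the hard statement; there is no direct zeta-integral representation of $J_{\pi,A}$ with Euler factor $L(s,\JL(\pi),\wedge^2)$ that would make this immediate.

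If you wish to keep only the citation, that is exactly what the paper does; if you want to give genuine intuition, it should reflect what \cite{MOY} actually proves rather than the analogies above.
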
  
\begin{proof}
This is a special case of \cite[Theorem 6.8]{MOY}. 
\end{proof}

In particular, under the hypothesis of the above theorem which imply that $\pi=\pi^\vee$, it makes sense to define the regularized intertwining period 
\[J_{\pi,A}^*=(s^{k_\pi}J_{\pi,A}(s))_{|s=0}\in \Hom_{H_{A,2m}}(\pi\times \pi,\C)-\{0\}.\]

\section{The sign in the functional equation of open intertwining periods}\label{sec:signFE}

In this section, for $\pi\in \gen(G)$, we study some sign in the functional of $J_{\pi,A}(s)$, and relate it to analytic properties of $J_{\pi,A}(s)$ at $s=0$. 

\begin{definition}
Let $\pi\in \sdg(G)$. We set 
\[\beta_{\pi,A,\psi}(s):=\gamma(-s,\JL(\pi),\wedge^2,\psi)\gamma(s,\JL(\pi),\sym^2,\psi)\alpha_{\pi,A}(s)=c(\pi,\psi)\gamma((s+1)/2,\JL(\pi)_A,\psi_A).\]
\end{definition}

We observe that this normalized proportionality constant is the one occurring in the functional equation of $J_{\pi,A}(s)$ with respect to normalized intertwining operators. Namely for $\pi \in \sdg(G)$ and $f_s\in \pi[s/2]\times \pi[-s/2]$ a holomorphic section, Equation\eqref{eq:lipfe} reads:

\begin{equation}\label{eq:norm eq}
J_{\pi,A}(-s,N_{\pi,\psi}(s)f_s)=\beta_{\pi,A,\psi}(s)J_{\pi,A}(s,f_s)
\end{equation}

One key observation of this paper is the following.

\begin{proposition}\label{prop:sign dist}
Assume that either $G=\GL_n(F)$ is split and $\pi\in \gadist(G)$, or that $\pi\in \cadist(G)$, then $\beta_{\pi,A,\psi}(0)=1$. 
\end{proposition}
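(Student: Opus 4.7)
The plan is to combine the three main inputs already at our disposal: Proposition \ref{prop Shahidi}, Theorem \ref{thm:pole order lip}, and the functional equation \eqref{eq:norm eq}, together with the observation that our distinguished $\pi$ is automatically self-dual with trivial central character.

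First I would check that both hypotheses of the statement place us inside the setting of Proposition \ref{prop Shahidi}. Indeed, in either case $\pi \in \symp(G)$ by \cite[Theorem 1.1]{Sign} (together with the split case result cited after it), so $\phi_\pi$ takes values in $\Sp_n(\C)$; thus $\pi = \pi^\vee$ and $\omega_\pi = \det \phi_\pi$ is trivial. Moreover, in the split case $\pi$ is generic, and in the second case $\pi$ is cuspidal, matching the two alternatives in Proposition \ref{prop Shahidi}. Thus $N_{\pi,\psi}(0) = (-1)^{k_\pi}\Id_{\pi\times\pi}$. Similarly, both hypotheses fall under Theorem \ref{thm:pole order lip}, so $J_{\pi,A}(s)$ has a pole of order exactly $k_\pi$ at $s=0$ and the regularized form $J_{\pi,A}^* = (s^{k_\pi}J_{\pi,A}(s))_{|s=0}$ is a nonzero element of $\Hom_{H_{A,2m}}(\pi\times\pi,\C)$.

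The core of the proof is then to evaluate the normalized functional equation \eqref{eq:norm eq} at $s=0$, treating the pole carefully. Given a holomorphic section $f_s$, I would write $J_{\pi,A}(s,f_s) = s^{-k_\pi} J^*_s(f_s)$ where $s\mapsto J^*_s$ is holomorphic at $0$ with $J^*_0 = J_{\pi,A}^*$. Then
\[
J_{\pi,A}(-s, N_{\pi,\psi}(s)f_s) = (-s)^{-k_\pi} J^*_{-s}(N_{\pi,\psi}(s) f_s) = (-1)^{k_\pi}s^{-k_\pi} J^*_{-s}(N_{\pi,\psi}(s) f_s).
\]
Substituting into \eqref{eq:norm eq} and multiplying by $s^{k_\pi}$ gives the identity of holomorphic functions
\[
(-1)^{k_\pi} J^*_{-s}(N_{\pi,\psi}(s) f_s) = \beta_{\pi,A,\psi}(s)\, J^*_s(f_s),
\]
which already forces $\beta_{\pi,A,\psi}(s)$ to be holomorphic and non-vanishing at $s=0$. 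Evaluating at $s=0$ and applying Proposition \ref{prop Shahidi} turns the left-hand side into $(-1)^{k_\pi}\cdot(-1)^{k_\pi} J^*_{\pi,A}(f_0) = J^*_{\pi,A}(f_0)$, so
\[
J^*_{\pi,A}(f_0) = \beta_{\pi,A,\psi}(0)\, J^*_{\pi,A}(f_0).
\]
Since $J^*_{\pi,A}\neq 0$, one may choose $f_0$ with $J^*_{\pi,A}(f_0)\neq 0$ and conclude $\beta_{\pi,A,\psi}(0) = 1$.

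The only real subtlety, and the step I would be most careful about, is the bookkeeping of the sign: the factor $(-1)^{k_\pi}$ arising from replacing $(-s)^{-k_\pi}$ by $(-1)^{k_\pi}s^{-k_\pi}$ must cancel precisely the factor $(-1)^{k_\pi}$ coming from $N_{\pi,\psi}(0)$, and it is this cancellation—traceable to Shahidi's reality statement for normalized intertwining operators together with the order of the pole of $L(s,\JL(\pi),\wedge^2)$—that produces the clean identity $\beta_{\pi,A,\psi}(0)=1$. Everything else is a direct substitution once Proposition \ref{prop Shahidi} and Theorem \ref{thm:pole order lip} are in place.
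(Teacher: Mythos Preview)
Your proof is correct and follows exactly the paper's approach: multiply the normalized functional equation \eqref{eq:norm eq} by $s^{k_\pi}$, take the limit at $s=0$ using Theorem \ref{thm:pole order lip}, and then apply Proposition \ref{prop Shahidi} to cancel the two factors of $(-1)^{k_\pi}$. You have merely spelled out in more detail the verification that $\pi$ satisfies the hypotheses of Proposition \ref{prop Shahidi} and the sign bookkeeping, which the paper leaves implicit.
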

\begin{proof}
By Theorem \ref{thm:pole order lip}, we can  multiply Equation \eqref{eq:norm eq} by 
$s^{k_\pi}$ and take the limit when $s\to 0$, to obtain 
\[(-1)^{k_\pi}J_{\pi,A}^*\circ N_{\pi,\psi}(0) =\beta_{\pi,A,\psi}(0)J_{\pi,A}^*.\] We now conclude by appealing to Proposition \ref{prop Shahidi}, which says that $N_{\pi,\psi}(0)=(-1)^{k_\pi}\Id_{\pi\times \pi}$. 
\end{proof}

In the proof below, we use the fact that the direct implication of Theorem \ref{conj:PTB} is known when $F$ is Archimedean; indeed, in this case, the theorem is proved in \cite{ST23} when $A/F$ is quadratic, and the direct implication of interest to us is proved in \cite[Appendix D]{Sign} when $A\simeq F\times F$. 

\begin{corollary}\label{cor:key}
Let $\pi$ be as in Proposition \ref{prop:sign dist}, but assume moreover that it is unitary, and let $c(\pi,\psi)$ be as in Theorem \ref{thm:almost explicit constant}. Then $c(\pi,\psi)=(-1)^m\eta_{A/F}(-1)^{n/2}$.
\end{corollary}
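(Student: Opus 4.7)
The plan is to extract $c(\pi,\psi)$ directly from the identity $\beta_{\pi,A,\psi}(0)=1$ provided by Proposition \ref{prop:sign dist}. Unwinding the formula for $\beta_{\pi,A,\psi}$ from Theorem \ref{thm:almost explicit constant}, this identity reads
\[c(\pi,\psi)\,\gamma(1/2,\JL(\pi)_A,\psi_A)=1,\]
so the problem reduces to evaluating $\gamma(1/2,\JL(\pi)_A,\psi_A)=\gamma(1/2,\phi_\pi,\psi)\,\gamma(1/2,\eta_{A/F}\otimes \phi_\pi,\psi)$.

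Since $\pi$ is $A$-distinguished, the inclusion $\gadist(G)\subseteq \symp(G)$ recalled in Section \ref{sec:gendist} forces $\phi_\pi$ to be symplectic, in particular self-dual; and so is $\eta_{A/F}\otimes \phi_\pi$. Hence the $L$-factor ratios entering both gamma factors are trivially equal to $1$ (numerator and denominator being the same meromorphic function evaluated at the same point), and one finds
\[\gamma(1/2,\JL(\pi)_A,\psi_A)=\epsilon(1/2,\phi_\pi,\psi)\,\epsilon(1/2,\eta_{A/F}\otimes \phi_\pi,\psi)=\eta_{A/F}(-1)^{-n/2}\,\e_A(\phi_\pi),\]
where $\e_A(\phi_\pi)$ is the symplectic root number defined as in the introduction (extended formally to the case $A\simeq F\times F$ by the same formula). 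Since $\eta_{A/F}(-1)^{n/2}$ and $\e_A(\phi_\pi)$ both lie in $\{\pm 1\}$, inverting yields
\[c(\pi,\psi)=\eta_{A/F}(-1)^{n/2}\,\e_A(\phi_\pi).\]

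It remains to prove $\e_A(\phi_\pi)=(-1)^m$, and this is where I appeal to the direct implication of Theorem \ref{conj:PTB}(2), which is already known in the literature (as recalled in the introduction) and also at Archimedean places via \cite{ST23} and \cite[Appendix D]{Sign}. In the cuspidal case $\pi\in \cadist(G)$, the parameter $\phi_\pi$ is irreducible symplectic of dimension $n=md$ and is its own sole summand $\phi_i$ in the decomposition of Theorem \ref{conj:PTB}(2), so the direct implication gives $\e_A(\phi_\pi)=(-1)^{n/d}=(-1)^m$ at once. In the split case $G=\GL_n(F)$ (so $d=1$), decompose $\phi_\pi=\bigoplus_{i\in I_\phi}\phi_i\oplus \bigoplus_{j\in J_\phi}(\phi_j\oplus \phi_j^\vee)$; the direct implication supplies $\e_A(\phi_i)=(-1)^{n_i}$, while each hyperbolic piece contributes $\e_A(\phi_j\oplus \phi_j^\vee)=1$, which follows from a direct epsilon-factor computation using the standard identity $\epsilon(1/2,\phi,\psi)\epsilon(1/2,\phi^\vee,\psi)=\det\phi(-1)$ applied to both $\phi_j$ and $\eta_{A/F}\otimes \phi_j$. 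Multiplicativity of $\e_A$ in direct sums together with the parity relation $\sum_i n_i\equiv n\equiv m\pmod 2$ then yields $\e_A(\phi_\pi)=(-1)^m$, completing the argument. The only mildly technical step is the epsilon-factor bookkeeping for the hyperbolic summands in the split case; everything else is pure unwinding of definitions built on Proposition \ref{prop:sign dist} and the direct implication of Theorem \ref{conj:PTB}.
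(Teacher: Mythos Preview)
Your argument is correct and follows essentially the same route as the paper: reduce $\beta_{\pi,A,\psi}(0)=1$ to $c(\pi,\psi)\epsilon(1/2,\JL(\pi)_A,\psi_A)=1$, and then identify the epsilon value via the direct implication of Theorem~\ref{conj:PTB}. Two small points are worth flagging. First, your claim that the $L$-ratio in $\gamma(1/2,\JL(\pi)_A,\psi_A)$ is ``trivially $1$'' by self-duality is not automatic: if $L(s,\JL(\pi)_A)$ had a pole at $s=1/2$, the limit of $L(1-s)/L(s)$ would pick up the sign $(-1)^k$ from the pole order $k$. This is exactly where the unitarity hypothesis is used in the paper (it forces holomorphy at $s=1/2$), so you should invoke it explicitly. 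Second, when $A\simeq F\times F$ the paper does not appeal to an analogue of Theorem~\ref{conj:PTB} but simply computes $\epsilon(1/2,\JL(\pi)_A,\psi_A)=\epsilon(1/2,\JL(\pi),\psi)^2=1$ (self-dual, trivial central character) together with $\eta_{A/F}(-1)=1$ and the fact that $m$ is even in this case; your route via $\e_A(\phi_i)=1=(-1)^{n_i}$ reaches the same conclusion but is a slight detour. Your explicit treatment of the hyperbolic summands in the split-$G$ generic case is a welcome unpacking of a step the paper leaves implicit.
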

\begin{proof}
The unitarity assumption guarantees that $L(s,\JL(\pi)_A)$ is holomorphic at $s=1/2$ so that $\gamma(1/2,\JL(\pi)_A,\psi_A)=\epsilon(1/2,\JL(\pi)_A,\psi_A)$. Hence if $A/F=E/F$ is quadratic, this is a consequence of the direct implication in Theorem \ref{conj:PTB} and Proposition \ref{prop:sign dist}. If $A\simeq F\times F$ then $\epsilon(1/2,\JL(\pi_A),\psi)=\epsilon(1/2,\JL(\pi),\psi)^2=1$ since $\pi$ is self-dual with trivial central character, and $\eta_{A/F}(-1)=1$.  The result follows again from Proposition \ref{prop:sign dist}. 
\end{proof}

One part of the following result is given by \cite[Lemma 6.1 and Proposition 6.9]{MOY}, whereas the other direction follows from \cite[Lemma 5.1 and Proposition 4.13]{MOY}.

\begin{proposition}\label{prop:key}
Let $\pi\in\cusp(G)$ and $A=E$ be a quadratic extension of $F$, then $J_{\pi,E}(s)$ has a pole at $s=0$, necessary simple, if and only if $\pi$ is $E$-distinguished.
\end{proposition}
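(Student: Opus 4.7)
My plan is to prove the two implications separately, in each case invoking one of the two clusters of results from \cite{MOY} cited just before the statement.

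For the implication $\pi\in\cadist(G)\Rightarrow J_{\pi,E}(s)$ has a pole at $s=0$, I would argue as follows. By the inclusion $\cadist(G)\subseteq\symp(G)$ recalled in Section \ref{sec:gendist}, the parameter $\phi_\pi$ takes values in $\Sp_n(\C)$. Since $\pi$ is cuspidal, $\JL(\pi)$ is a self-dual discrete series representation of $\GL_n(F)$ with symplectic $L$-parameter, and it is a classical fact that the exterior-square $L$-function of such a representation has a simple pole at $s=0$, so $k_\pi=1$. Then Theorem \ref{thm:pole order lip} gives $\ord(s=0,J_{\pi,E}(s))=k_\pi=1$, which is precisely a simple pole.

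For the converse, suppose $J_{\pi,E}(s)$ has a pole of some order $k\geq 1$ at $s=0$. The leading coefficient
\[
J^*\ :=\ \lim_{s\to 0} s^{k}\,J_{\pi,E}(s)
\]
is then a nonzero element of $\Hom_{H_{E,2m}}(\pi\times\pi^\vee,\C)$. My plan is to apply the Bernstein--Zelevinsky geometric lemma to the $H_{E,2m}$-restriction of $\pi\times\pi^\vee$: the resulting filtration is indexed by the double cosets $P_{m,m}\backslash G_{2m}/H_{E,2m}$. By its very construction $J_{\pi,E}(s)$ is the meromorphic continuation of the integral over the open orbit $P_{m,m}x_0H_{E,2m}$, whose stabilizer is a copy of $H_{E,m}$ naturally embedded in $M_{m,m}$. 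Frobenius reciprocity would then convert $J^*$ into a nonzero element of $\Hom_{H_{E,m}}(\pi,\C)$, establishing that $\pi$ is $E$-distinguished. Contributions from the closed orbits, whose stabilizers contain non-trivial unipotent radicals, vanish by cuspidality of $\pi$.

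The main obstacle is the converse direction: one must rigorously justify that the principal-part coefficient $J^*$ of the meromorphically continued family of open-orbit integrals factors through the open stratum and that the Frobenius reciprocity identification produces an honest $H_{E,m}$-invariant form on $\pi$ (rather than a twisted or modified one). This delicate asymptotic/Mackey-theoretic analysis is precisely what is carried out in \cite[Lemma 5.1 and Proposition 4.13]{MOY}. Once both implications are in hand, the simplicity of the pole whenever it exists follows automatically, since we have established $k_\pi=1$ in the cuspidal symplectic situation.
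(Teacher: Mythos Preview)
Your overall strategy coincides with the paper's: both implications are ultimately delegated to \cite{MOY}, and the paper's proof is nothing more than the two citations in the sentence preceding the proposition.

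The forward implication via Theorem~\ref{thm:pole order lip} is correct and is a clean repackaging of the same input (indeed Theorem~\ref{thm:pole order lip} is itself \cite[Theorem 6.8]{MOY}); this is arguably more transparent than the paper's bare citation of \cite[Lemma 6.1 and Proposition 6.9]{MOY}.

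Your sketch of the converse, however, misidentifies the mechanism. Because $\theta_o(P_{m,m})$ is opposite to $P_{m,m}$, the stabilizer $x_0^{-1}P_{m,m}x_0\cap H_{E,2m}$ sits inside $M_{m,m}$ and acts on $\pi\otimes\pi^\vee$ through (a twist of) the diagonal $G_m$; this is precisely why the integrand $\ell(f_s(x_0h))$ is well-defined using only the canonical pairing $\ell$, with no $H_{E,m}$-distinction hypothesis on $\pi$. Frobenius reciprocity on the open stratum therefore lands in $\Hom_{G_m^{\mathrm{diag}}}(\pi\otimes\pi^\vee,\C)$, which is one-dimensional for \emph{every} irreducible $\pi$ and carries no $E$-distinction information. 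In particular, the existence of a nonzero leading coefficient $J^*\in\Hom_{H_{E,2m}}(\pi\times\pi^\vee,\C)$ is not what distinguishes the pole case from the regular case: for generic $s$ the value $J_{\pi,E}(s)$ already supplies such a nonzero form. The link between a \emph{pole} and $H_{E,m}$-distinction of $\pi$ comes instead from an asymptotic analysis of the open integral near the boundary of the open orbit, where the relevant closed double coset has stabilizer involving $H_{E,m}\times H_{E,m}$; this is the content of \cite[Lemma 5.1 and Proposition 4.13]{MOY}. Your final sentence correctly flags that the substance lies there, but the Mackey heuristic you offer beforehand points to the wrong orbit and the wrong Frobenius-reciprocity target.
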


A second key observation of this paper is the following.  

\begin{theorem}\label{thm:key1}
Let $\pi\in\cusp(G)$ be symplectic. Assume that $A/F=E/F$ is quadratic. Then:
\begin{enumerate}
    \item if $J_{\pi,E}(s)$ has a pole at $s=0$, then $\beta_{\pi,E,\psi}(0)=1$,
    \item if $J_{\pi,E}(s)$ is holomorphic at $s=0$, then $\beta_{\pi,E,\psi}(0)=-1$. 
    \end{enumerate} 
\end{theorem}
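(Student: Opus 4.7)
The plan is to read $\beta_{\pi, E, \psi}(0)$ off of the functional equation \eqref{eq:norm eq} specialized at $s = 0$, in the spirit of the proof of Proposition~\ref{prop:sign dist}. The preparatory observation is that $k_\pi = 1$: since $\pi$ is cuspidal and symplectic, $\phi_{\JL(\pi)}$ is an irreducible symplectic $L$-parameter, so $\wedge^2\phi_{\JL(\pi)}$ contains the trivial character exactly once and $L(s, \JL(\pi), \wedge^2)$ has a simple pole at $s = 0$. Consequently Proposition~\ref{prop Shahidi} gives $N_{\pi, \psi}(0) = -\Id_{\pi \times \pi}$.

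Part~(1) is then an immediate combination of ingredients already in place: if $J_{\pi, E}(s)$ has a pole at $s = 0$, Proposition~\ref{prop:key} places $\pi$ in $\cadist(G)$, and Proposition~\ref{prop:sign dist} yields $\beta_{\pi, E, \psi}(0) = 1$.

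For Part~(2), suppose $J_{\pi, E}(s)$ is holomorphic at $s = 0$. Then $\pi$ is not $E$-distinguished by Proposition~\ref{prop:key}, and $J_{\pi, E}(0)$ is a well-defined element of $\Hom_{H_{E, 2m}}(\pi \times \pi^\vee, \C)$. Specializing \eqref{eq:norm eq} at $s = 0$ with $N_{\pi, \psi}(0) = -\Id$ yields
\[
-\,J_{\pi, E}(0, f_0) \;=\; \beta_{\pi, E, \psi}(0)\, J_{\pi, E}(0, f_0)
\]
for every $f_0 \in \pi \times \pi^\vee$. Hence if $J_{\pi, E}(0)$ is not the zero functional, one obtains $\beta_{\pi, E, \psi}(0) = -1$, as desired.

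The main obstacle I anticipate is proving the non-vanishing of $J_{\pi, E}(0)$ in the non-distinguished case, i.e.\ ruling out an accidental zero of the intertwining period at $s = 0$ beyond its guaranteed holomorphy. The natural approach is to exploit the structural description of intertwining periods from \cite{MOY} underpinning Proposition~\ref{prop:key}: up to a meromorphic factor that is non-vanishing at $s = 0$, $J_{\pi, E}(s)$ is an explicit local $L$-factor whose possible poles at $s = 0$ are precisely responsible for $E$-distinction. In the non-distinguished case this $L$-factor is holomorphic and non-zero at $s = 0$, and hence so is $J_{\pi, E}(0)$. With this input the proof of Part~(2) is essentially a one-line consequence of the functional equation.
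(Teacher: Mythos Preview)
Your approach matches the paper's: Part~(1) via Proposition~\ref{prop:key} then Proposition~\ref{prop:sign dist}, and Part~(2) by specializing \eqref{eq:norm eq} at $s=0$ with $N_{\pi,\psi}(0)=(-1)^{k_\pi}\Id$ and $k_\pi=1$. You are in fact more explicit than the paper on one point: the paper simply asserts that \eqref{eq:norm eq} and Proposition~\ref{prop Shahidi} give $\beta_{\pi,E,\psi}(0)=(-1)^{k_\pi}$ without isolating the non-vanishing of $J_{\pi,E}(0)$ as a separate input, whereas you flag it and indicate that it is to be extracted from the results of \cite{MOY} underlying Proposition~\ref{prop:key}.
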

\begin{proof}
First assume that $J_{\pi,E}(s)$ has a pole at $s=0$. Then $\pi$ is $E$-distinguished according to Proposition \ref{prop:key}, and $\beta_{\pi,E,\psi}(0)=1$ thanks to Proposition \ref{prop:sign dist}. For the other direction, if $J_{\pi,E}(s)$ is holomorphic at $s=0$, then Equation \eqref{eq:norm eq} and  
Proposition \ref{prop Shahidi} imply that $\beta_{\pi,E,\psi}(0)=(-1)^{k_\pi}$. But $k_\pi\leq 1$ since $\JL(\pi)$ is square-integrable, and $k_\pi=1$ since $\pi$ is symplectic by assumption.
\end{proof}

\section{Proof of the epsilon dichotomy conjecture}

We assume that $A/F=E/F$ is a quadratic extension of $p$-adic fields. 

\subsection{The direct implication}\label{sec:direct}

The direct implication of Theorem \ref{conj:PTB} claims that if $\pi\in \cedist(G)$, then $\e_E(\phi_\pi)=(-1)^m$. It is proved in \cite[Theorem 1.1]{Xue21}. It has been known for a while that this result follows from the direct implication of the Guo-Jacquet conjecture. The argument is attributed to Dipendra Prasad, and it is given in \cite[Theorem 1.7]{FMW} in a special case. 

\begin{proposition}[Xue]
\label{prop:direct}
$\pi\in \cedist(G)$, then $\e_E(\phi_\pi)=(-1)^m$.
\end{proposition}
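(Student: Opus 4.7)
The plan is to follow Prasad's argument from \cite{Pr07}, exposed in \cite[Theorem 1.7]{FMW} in a special case, which deduces the direct implication of Theorem \ref{conj:PTB} from the (now known) direct implication of the global Guo--Jacquet conjecture, via a product formula for epsilon factors.

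First, I would globalize the data: choose a quadratic extension of number fields $\mathfrak{E}/\mathfrak{F}$ and an inert place $v_0$ of $\mathfrak{F}$ with $\mathfrak{E}_{v_0}/\mathfrak{F}_{v_0}\simeq E/F$, an inner form $\mathbf{G}$ of $\GL_n$ over $\mathfrak{F}$ with $\mathbf{G}(\mathfrak{F}_{v_0})=G$, and a cuspidal automorphic representation $\Pi$ of $\mathbf{G}(\mathbb{A}_\mathfrak{F})$ with $\Pi_{v_0}\simeq \pi$ that is globally $\mathfrak{E}$-distinguished. At almost all finite places $v\neq v_0$, one arranges $\Pi_v$ to be unramified and $\mathfrak{E}_v$-distinguished (so that the corresponding local epsilon factor is trivial); at the remaining finitely many auxiliary places, one arranges $\Pi_v$ to be amenable to a direct local computation of $\e_{\mathfrak{E}_v}(\phi_{\Pi_v})$ (for instance Archimedean, where \cite{ST23} and \cite[Appendix D]{Sign} supply the formula, or of principal-series type where the sign can be computed by hand). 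The existence of such a globalization rests on a simple trace formula comparison for distinguished representations.

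Second, I would apply the direct implication of the global Guo--Jacquet conjecture (established in \cite[Theorem 1.1]{Xue21}) to deduce that $\JL(\Pi)$ is a stable base change from $\mathfrak{E}$. This forces its global $L$-parameter to be of symplectic type, hence self-dual, and the global functional equation together with the product formula $\prod_v \eta_{\mathfrak{E}_v/\mathfrak{F}_v}(-1)=1$ yields the identity $\prod_v \e_{\mathfrak{E}_v}(\phi_{\Pi_v}) = 1$. Combined with the vanishing of the contribution from unramified places and the explicit evaluation $\e_{\mathfrak{E}_v}(\phi_{\Pi_v})=(-1)^m$ at the auxiliary places (forced by the local cases already established there), the product formula isolates the factor at $v_0$ and yields $\e_E(\phi_\pi)=(-1)^m$.

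The main obstacle is the globalization step: producing $\Pi$ with the three simultaneous requirements (prescribed local component at $v_0$, genuine global $\mathfrak{E}$-distinction, and controlled ramified behavior elsewhere) is by far the technically heaviest input, and in practice is what the trace formula machinery of \cite{Xue21} is designed to deliver. A secondary but nontrivial bookkeeping point is guaranteeing the parity of the finite product of local signs at $v\neq v_0$ matches $(-1)^m$; this is typically arranged by including one additional auxiliary place in the globalization carrying a known local sign, so that the cancellation in the product formula gives exactly the desired conclusion at $v_0$.
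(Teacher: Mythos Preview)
Your strategy is Prasad's argument and is essentially correct, but two points deserve comment.

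First, a small slip: the direct implication of the global Guo--Jacquet conjecture does not say that $\JL(\Pi)$ is a stable base change from $\mathfrak{E}$; it says that global $\mathfrak{E}$-distinction of $\Pi$ on the inner form implies that $\JL(\Pi)$ carries a global linear (Friedberg--Jacquet) period, hence is symplectic. Your product-formula conclusion then goes through exactly as you wrote.

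Second, and more to the point of comparison, the paper runs the same Prasad strategy but deliberately swaps both inputs so as to avoid any trace formula. For the globalization step the paper uses the Prasad--Schulze-Pillot theorem \cite[Theorem 4.1]{PSP} (as in the proof of \cite[Theorem 5.7]{Sign}) rather than a relative trace formula comparison; and for the global input replacing the Guo--Jacquet direct implication the paper invokes \cite[Theorem 1.4, (1)$\Rightarrow$ second part of (2)]{MOY}, which is proved via intertwining periods. The paper then concludes, as you do, from the triviality of the global base-changed root number and the knowledge of all local signs away from $v_0$. The author explicitly remarks that this route no longer relies on the Guo--Jacquet trace formula, which is the thematic point of the whole paper. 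Your approach is the original one due to Xue; the paper's variant buys methodological uniformity with the rest of the article.
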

\begin{proof}
In view of \cite[Theorem 1.4, (1) implies second part of (2)]{MOY}, the idea of the proof of \cite[Theorem 1.7]{FMW}, based on the globalization theorem \cite[Theorem 4.1]{PSP} of Prasad and Schulze-Pillot, works in full generality. We sketch a proof, but see Lemma \ref{lm:globlz} and Theorem \ref{thm:key} below for a more involved argument of the same type. Using \cite[Theorem 4.1]{PSP}, we globalize the situation exactly as in the proof of \cite[Theorem 5.7]{Sign}, and deduce the result from the triviality of the global base-changed root number, and the fact that the result is known at all places except the one of interest to us. 
\end{proof}

\begin{remark}
The above proof relies on properties of intertwining periods, and not on the Guo-Jacquet trace formula anymore. 
\end{remark}

\subsection{The converse implication}\label{sec:conv}

The proof of the converse implication is also local to global. If $\mathfrak{F}$ is a number field, we denote by $\A_\mathfrak{F}$ its ring of adeles. If moreover $\mathfrak{D}$ is an $\mathfrak{F}$-central division algebra of index $d$, and $\Pi$ is a cuspidal automorphic representation of $\GL_m(\mathfrak{D}_{\A_\mathfrak{F}})$, we denote by $\JL(\Pi)$ the discrete automorphic representation of $\GL_n(\A_F)=\GL_{md}(\A_F)$ defined in \cite{BR}. We say that that a cuspidal automorphic representation $\Pi'$ of $\GL_n(\A_F)$ is symplectic if it admits a Shalika period as in \cite{JS90}. If so it follows for example from \cite{MatBLMS} and \cite[Appendix B]{Sign} that $\Pi'_v$ is symplectic for any place $v$ of $\mathfrak{F}$. Our proof of the converse implication in the epsilon dichotomy conjecture is based on the determination of the constant $c(\pi,\psi)$ of Theorem \ref{thm:almost explicit constant}, when $\pi\in \sympc(G)$. One of the reasons why we can compute it by a local-global method, is the following equality.

\begin{lemma}\label{lm:product of c is one}
Let $\mathfrak{E}/\mathfrak{F}$ be a quadratic extension of number fields, and $\Psi:\mathfrak{F}\backslash \A_\mathfrak{F}\to \C^\times$ be a non trivial character. Let $\mathfrak{D}$ be an $\mathfrak{F}$-central division algebra. Assume moreover that 
$\mathfrak{E}$ is contained inside $\CM_m(\mathfrak{D})$. 
 If $\Pi$ is a self-dual cuspidal automorphic representation of $\GL_m(\mathfrak{D}_{\A_\mathfrak{F}})$ such that $\JL(\Pi)$ is cuspidal, then \[\prod_v c(\Pi_v,\Psi_v)=1,\] where the product is over all places of $F$. More specifically \[\prod_{v\in S} c(\Pi_v,\Psi_v)=1,\] where $S$ is the set of places such that of $\mathfrak{F}$ outside of which $\mathfrak{D}_v$ is split, and $\mathfrak{E}_v/\mathfrak{F}_v$, $\pi_v$ and $\Psi_v$ are all unramified.
\end{lemma}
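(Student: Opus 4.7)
The plan is to realize the product $\prod_v c(\Pi_v,\Psi_v)$ as the product-over-places of the proportionality constants in the local functional equations \eqref{eq:lipfe} of the open intertwining periods, and then exploit the fact that the corresponding global functional equation has trivial proportionality constant via a standard Eisenstein series argument.

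First I would introduce the global open intertwining period $J_{\Pi,\mathfrak{E}}(s)$ on the induced representation $\Pi[s/2]\times \Pi[-s/2]$ of $G_{2m}(\A_{\mathfrak{F}})$, defined as the integral over the open $H_{\mathfrak{E},2m}(\A_{\mathfrak{F}})$-orbit of the Eisenstein section attached to $\Pi$. Thanks to the cuspidality of $\JL(\Pi)$, I can unfold as in the local computations underlying \cite{MOY}, so that the period integral converges absolutely in a right half-plane and factors into a product of the local intertwining periods $J_{\Pi_v,\mathfrak{E}_v}(s)$ on decomposable data.

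Next, I would invoke the functional equation of the Eisenstein series, $E(-s,M_\Pi(s)f)=E(s,f)$ with $M_\Pi(s)=\bigotimes_v M_{\Pi_v}(s)$, and take its period against $H_{\mathfrak{E},2m}(\mathfrak{F})\backslash H_{\mathfrak{E},2m}(\A_\mathfrak{F})$: this gives a functional equation for $J_{\Pi,\mathfrak{E}}(s)$ with trivial proportionality constant. Comparing with the Euler factorization of the previous step and the local functional equation \eqref{eq:lipfe} yields
\[\prod_v \alpha_{\Pi_v,\mathfrak{E}_v}(s)=1.\]
Substituting Theorem \ref{thm:almost explicit constant} at each place, this rewrites as
\[\prod_v c(\Pi_v,\Psi_v)=\prod_v \frac{\gamma(-s,\JL(\Pi_v),\wedge^2,\Psi_v)\,\gamma(s,\JL(\Pi_v),\sym^2,\Psi_v)}{\gamma((s+1)/2,\JL(\Pi_v)_{\mathfrak{E}_v},\Psi_{\mathfrak{E}_v})}.\]
Each global product of local gamma factors on the right equals $1$: for the numerator this is the global functional equation of Shahidi's exterior and symmetric square $L$-functions of the cuspidal automorphic representation $\JL(\Pi)$, and for the denominator it is the global functional equation of the Godement--Jacquet $L$-function of $\JL(\Pi)$ together with its twist by $\eta_{\mathfrak{E}/\mathfrak{F}}$. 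Hence $\prod_v c(\Pi_v,\Psi_v)=1$. The refinement to $S$ is then immediate: by the unramified assertion of Theorem \ref{thm:almost explicit constant}, $c(\Pi_v,\Psi_v)=1$ at every $v\notin S$, so the global product collapses to $\prod_{v\in S}c(\Pi_v,\Psi_v)=1$.

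The main obstacle I anticipate is the rigorous justification of the Euler factorization and of the trivial-constant functional equation for $J_{\Pi,\mathfrak{E}}(s)$ in the first two steps; this is standard for Eisenstein periods on Galois symmetric pairs of the type $(G_{2m},H_{\mathfrak{E},2m})$, but it requires careful handling of absolute convergence and meromorphic continuation. The cuspidality of $\JL(\Pi)$ is what ensures the relevant Eisenstein series is entire in the region of interest, sidestepping the analytic subtleties that would otherwise arise from residues.
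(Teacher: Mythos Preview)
Your approach is essentially the paper's: it cites \cite[Corollary 7.7]{MOY} and \cite[Corollary 3.10]{MOY} (the latter attributed to Shahidi) precisely for the global functional equation of the intertwining period and the global gamma-factor identities that you outline, obtaining
\[
\prod_{v\in S}\alpha_{\Pi_v,\mathfrak{E}_v}(s)=\prod_{v\in S}\frac{\gamma((s+1)/2,\JL(\Pi_v)_{\mathfrak{E}_v},(\Psi_v)_{\mathfrak{E}_v})}{\gamma(-s,\JL(\Pi_v),\wedge^2,\Psi_v)\,\gamma(s,\JL(\Pi_v),\sym^2,\Psi_v)},
\]
and then concludes from Theorem \ref{thm:almost explicit constant} exactly as in your last two steps. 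The Eisenstein-period unfolding you flag as the ``main obstacle'' is indeed what is packaged inside the cited result from \cite{MOY}.
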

\begin{proof}
By \cite[Corollary 7.7]{MOY}, together with \cite[Corollary 3.10]{MOY} due to Shahidi (\cite{Shcertain}), we deduce that 
\[\prod_{v\in S} \alpha_{\Pi_v,\mathfrak{E}_v}(s)=\prod_{v\in S}\frac{\gamma((s+1)/2, \JL(\Pi_v)_{\mathfrak{E}_v},(\Psi_v)_{\mathfrak{E}_v})}
{\gamma(-s,\JL(\Pi_v),\wedge^2,\Psi_v)\gamma(s,\JL(\Pi_v),\sym^2,\Psi_v)}.\] The result now follows from Theorem \ref{thm:almost explicit constant}.
\end{proof}

We recall that when $p\neq 2$, then $\cadist(G)$ is non empty according to \cite{Sec24}. When $A/F$ is unramified, and for any $p$, one easily deduces from \cite{Prfinite} that $\cadist(G)$ contains level zero elements, and we refer to \cite{CM} when $G=\GL_n(F)$. Actually $\cadist(G)$ is never empty according to a result of Beuzart-Plessis, and using it would slightly simplify the assumptions in Lemma \ref{lm:globlz} below, namely by removing the requirement that $w_0$ lies over an odd prime number in Assumption (2)(b). The following is our adhoc globalization lemma, which relies on the new results of \cite{TW}, which avoid to use endoscopic transfer from $\mathrm{SO}_{n+1}(\A_{\mathfrak{F}})$ to $\GL_n(\A_{\mathfrak{F}})$. 

\begin{lemma}\label{lm:globlz}
Let $\pi\in \sympc(G)$. There exist:
\begin{enumerate}
\item a number fields $\mathfrak{F}$ and a place $v_0$ such that $\mathfrak{F}_{v_0}\simeq F$,
\item an $\mathfrak{F}$-central division algebra $\mathfrak{D}$ such that:
\begin{enumerate}
\item $\mathfrak{D}_{v_0}/\mathfrak{F}_{v_0}\simeq D/F$, 
\item $\mathfrak{D}_{w_0}$ is a division algebra of Hasse invariant opposite to that of $D$, for some finite place $w_0\neq v_0$ of $\mathfrak{F}$ lying over an odd prime number,
\item $\mathfrak{D}_v$ is a split matrix algebra for any place $v$ of $\mathfrak{F}$ different from $v_0$ and $w_0$, 
\end{enumerate}
\item a cuspidal automorphic representation $\Pi$ of $\GL_m(\mathfrak{D}_{\A_\mathfrak{F}})$ such that:
\begin{enumerate}
\item $\JL(\Pi)$ is cuspidal and symplectic,
\item $\Pi_{v_0}\simeq \pi$,
\item $\Pi_{w_0}$ is cuspidal and $E'$-distinguished for some quadratic extension $E'/\mathfrak{F}_{w_0}$, 
\end{enumerate} 
\item a quadratic extension $\mathfrak{E}/\mathfrak{F}$ such that:
\begin{enumerate}
\item $\mathfrak{E}_{v_0}/\mathfrak{F}_{v_0}\simeq E/F$ and $\mathfrak{E}_{w_0}/\mathfrak{F}_{w_0} \simeq E'/\mathfrak{F}_{w_0}$,
\item $\mathfrak{E}_v/\mathfrak{F}_v\simeq \mathfrak{F}_v\times \mathfrak{F}_v$ splits at the finite number of places $v\notin \{v_0,w_0\}$ such that $\pi_v$ is not isomorphic to a principal series induced from a Borel subgroup of $\GL_m(\mathfrak{D}_v)\simeq \GL_n(\mathfrak{F}_v)$,
\item $\mathfrak{E}/\mathfrak{F}$ embeds into $\CM_m(\mathfrak{D})/\mathfrak{F}$. 
\end{enumerate}
\end{enumerate}
\end{lemma}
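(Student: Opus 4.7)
The plan is to construct the global data $(\mathfrak{F},\mathfrak{D},\mathfrak{E},\Pi)$ in three stages: the number field and the division algebra, then the quadratic extension with its embedding, and finally the automorphic representation, which is the main technical hurdle.

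First I would set up $\mathfrak{F}$ and $\mathfrak{D}$. Choose any number field $\mathfrak{F}$ having a finite place $v_0$ with $\mathfrak{F}_{v_0}\simeq F$; such a field is easily produced by realizing $F$ as the completion of a suitable finite extension of $\Q$. Fix an auxiliary finite place $w_0\neq v_0$ of $\mathfrak{F}$ lying over an odd rational prime, which exists since only finitely many places sit above any given prime. Since the prescribed invariants $\mathrm{inv}(D)$ at $v_0$, $-\mathrm{inv}(D)$ at $w_0$, and zero at all other places sum to zero in $\Q/\Z$, the Albert-Brauer-Hasse-Noether theorem produces an $\mathfrak{F}$-central division algebra $\mathfrak{D}$ realizing these local invariants exactly, giving conditions (1) and (2).

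Next I would construct $\mathfrak{E}/\mathfrak{F}$ and verify the embedding. Using Grunwald-Wang (whose exceptional case does not intervene for quadratic extensions), I can find $\mathfrak{E}/\mathfrak{F}$ quadratic with $\mathfrak{E}_{v_0}\simeq E$, with $\mathfrak{E}_{w_0}\simeq E'$ for a quadratic extension $E'/\mathfrak{F}_{w_0}$ to be pinned down in the last step, and with $\mathfrak{E}_v\simeq \mathfrak{F}_v\times \mathfrak{F}_v$ at the finitely many additional places required by (4b). For the embedding $\mathfrak{E}\hookrightarrow \CM_m(\mathfrak{D})$, I would invoke the local-global principle for embedding separable subalgebras in central simple algebras over number fields: since $n=md$ is even, a two-dimensional separable $\mathfrak{F}_v$-subalgebra embeds in $\CM_m(\mathfrak{D}_v)$ (satisfying the trace-zero condition of Section \ref{sec:not}) at every place $v$, so the global embedding exists.

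Finally I would produce the automorphic representation $\Pi$, which is the main obstacle. Apply \cite{Sec24}, valid at $w_0$ since the residue characteristic there is odd, to obtain a cuspidal $E'$-distinguished representation $\sigma$ of $\GL_m(\mathfrak{D}_{w_0})$ for a suitable quadratic $E'/\mathfrak{F}_{w_0}$; feed this $E'$ back into the construction of $\mathfrak{E}$ above. Then invoke the recent globalization theorem of Takanashi-Wakatsuki \cite{TW} to obtain a cuspidal automorphic representation $\Pi$ of $\GL_m(\mathfrak{D}_{\A_\mathfrak{F}})$ with $\JL(\Pi)$ simultaneously cuspidal and symplectic (that is, Shalika-periodic), and with $\Pi_{v_0}\simeq \pi$ and $\Pi_{w_0}\simeq \sigma$. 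The real difficulty is concentrated here: one must simultaneously prescribe two local components while enforcing the rigid global conditions of cuspidality and symplecticity of $\JL(\Pi)$, and the novelty of \cite{TW} is precisely that it achieves this within the $\GL_n$-world, without appealing to endoscopic transfer from $\mathrm{SO}_{n+1}$.
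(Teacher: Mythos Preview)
Your outline is broadly right, but the heart of the lemma --- step (3) --- is treated as a black box, and a nontrivial chain of arguments is missing. The result of \cite{TW} globalizes on $\GL_n(\A_{\mathfrak{F}})$, not on the inner form, and only yields a \emph{self-dual} cuspidal $\Pi'$ with prescribed square-integrable local components; moreover their Theorem~1.6 requires an auxiliary finite place $v_1\notin\{v_0,w_0\}$ at which the prescribed component is a cuspidal symplectic representation of the split $\GL_n(\mathfrak{F}_{v_1})$ (their hypothesis (A2)), which is why the paper introduces $v_1$ and an $\mathfrak{F}_{v_1}\times\mathfrak{F}_{v_1}$-distinguished cuspidal $\pi_{v_1}$ there. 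That $\Pi'$ is symplectic rather than orthogonal is then a separate argument: by \cite{JSEuler} one of the partial $\wedge^2$ or $\sym^2$ $L$-functions has a pole at $s=1$; if it were $\sym^2$ then \cite{KY} and \cite{Y} would force $\phi_\pi$ to be orthogonal, contradicting the fact that $\phi_\pi$ is irreducible symplectic; hence it is $\wedge^2$, and \cite{JS90} supplies the Shalika period. Only then does one descend $\Pi'$ to $\Pi$ on $\GL_m(\mathfrak{D}_{\A_{\mathfrak{F}}})$ via \cite{BR}, which also matches the local components. None of these steps appears in your sketch.

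There is also an ordering problem: condition (4b) asks $\mathfrak{E}_v$ to split at the finitely many places where $\Pi_v$ fails to be a full principal series, so $\mathfrak{E}$ can only be constructed \emph{after} $\Pi$, not before as you propose. Finally, your route to (4c) via a local--global embedding principle is not wrong in this specific situation, but the paper argues differently: the case $m$ even is trivial, while for $m$ odd one quotes \cite[Theorem 1.1]{SYY} to embed $\mathfrak{E}$ into $\mathfrak{D}$ itself.
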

\begin{proof}
(1) The existence of $\mathfrak{F}$ is well-known. 

(2) The existence of the division algebra satsifying (a), (b) and (c) follows from the Brauer-Hasse-Noether theorem.  

(3) We fix a quadratic extension $E'/\mathfrak{F}_{w_0}$, and an extra auxiliary finite place $v_1\neq w_0,v_0$. As discussed before the statement, there exists a cuspidal $E'$-distinguished representation $\pi_{w_0}$ of $\GL_m(\mathfrak{D}_{w_0})$, and an $\mathfrak{F}_{v_1}\times \mathfrak{F}_{v_1}$-distinguished cuspidal representation $\pi_{v_1}$ of $\GL_m(\mathfrak{D}_{v_1})\simeq \GL_n(\mathfrak{F}_{v_1})$. Now both $\JL(\pi)$ and $\JL(\pi_{w_0})$ are symplectic square-integrable representations, and $\JL(\pi_{v_1})=\pi_{v_1}$ is a cuspidal symplectic representation. We can thus apply \cite[Theorem 1.6]{TW} with their assumption (A2) holding at the place $v_1$ here (and $v_0$ there), to claim that there exists a self-dual cuspidal automorphic representation $\Pi'$ of $\GL_n(\A_F)$ such that $\Pi'_{v_0}=\JL(\pi)$ and $\Pi'_{w_0}=\JL(\pi_{w_0})$ (and $\Pi'_{v_1}=\pi_{v_1}=\JL(\pi_{v_1})$). By \cite{JSEuler}, this implies that either the partial symmetric or exterior-square L-function of $\Pi'$ has a pole at $s=1$. It cannot be the symmetric-square L-function. Indeed otherwise by \cite[Theorem A]{KY}, the representation $\Pi'$ would be globally distinguished (in the terminology of [ibid.]), hence locally distinguished, which by \cite[Theorem A]{Y} would imply that $\phi_{\pi}$ is orthogonal. Since $\phi_{\pi}$ is irreducible, this contradicts our assumption that it is symplectic. Now by \cite{JS90} we deduce that $\Pi'$ is symplectic. Finally by \cite[Section 18]{BR}, there exists a cuspidal automorphic representation $\Pi$ of $\GL_m(\mathfrak{D}_{\mathfrak{\A}_{\mathfrak{F}}})$ such that (a) $\Pi'=\JL(\Pi)$. By the same reference (b) $\Pi_{v_0}=\pi$ and (c) $\Pi_{w_0}=\pi_{w_0}$.

(4) The existence of $\mathfrak{E}$ satisfying (a) and (b) follows from Krasner's lemma and the weak approximation lemma, as in \cite[Section 9.6]{AKMSS}. We claim that (c) is automatic. This is obvious when $m$ is even, and follows from \cite[Theorem 1.1]{SYY} when $m$ is odd, which implies that $\mathfrak{E}/\mathfrak{F}$ embeds into $\mathfrak{D}$. 
\end{proof}

The upshot of the above lemma is the following.

\begin{fact}\label{fct}
By construction, the cuspidal automorphic representation $\Pi$ in Lemma \ref{lm:globlz} is such that $\Pi_v$ is $\mathfrak{E}_v$-distinguished 
for all $v\neq v_0$. 
\end{fact}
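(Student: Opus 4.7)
The plan is to verify $\mathfrak{E}_v$-distinction of $\Pi_v$ place by place, exploiting the various conditions built into Lemma~\ref{lm:globlz}. At $v = w_0$, the distinction is literally the content of (3)(c) of that lemma, so nothing remains to check. For any $v \neq v_0, w_0$, property (2)(c) ensures that $\mathfrak{D}_v$ is a split matrix algebra, so $\GL_m(\mathfrak{D}_v) \simeq \GL_n(\mathfrak{F}_v)$ and $\Pi_v$ coincides with $\JL(\Pi_v) = \Pi'_v$. Since $\Pi' = \JL(\Pi)$ is globally symplectic by construction (part (3)(a) of Lemma~\ref{lm:globlz}), its local component $\Pi_v$ is a generic representation of the split group $\GL_n(\mathfrak{F}_v)$ with symplectic $L$-parameter.

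If $\mathfrak{E}_v \simeq \mathfrak{F}_v \times \mathfrak{F}_v$, then $\mathfrak{E}_v$-distinction of $\Pi_v$ amounts to $\mathfrak{F}_v \times \mathfrak{F}_v$-distinction (the linear, or Friedberg--Jacquet, model), which follows from the equality $\gadist(\GL_n(\mathfrak{F}_v)) = \symp(\GL_n(\mathfrak{F}_v))$ for $A \simeq F \times F$ recalled in Section~\ref{sec:gendist}. If instead $\mathfrak{E}_v/\mathfrak{F}_v$ is a quadratic field extension, then property (4)(b) of Lemma~\ref{lm:globlz} forces $\Pi_v$ to be a principal series induced from a Borel subgroup of $\GL_n(\mathfrak{F}_v)$. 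The symplecticity of its $L$-parameter arranges the inducing characters into reciprocal pairs $\{\chi, \chi^{-1}\}$, and an open-orbit / Mackey-theoretic analysis of the symmetric pair $(\GL_n(\mathfrak{F}_v), H_{\mathfrak{E}_v})$ applied to this inducing datum produces a nonzero $H_{\mathfrak{E}_v}$-invariant functional on $\Pi_v$.

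The main subtlety lies in this last case, namely producing $\mathfrak{E}_v$-distinction of a symplectic Borel principal series when $\mathfrak{E}_v$ is quadratic; condition (4)(b) of Lemma~\ref{lm:globlz} was imposed precisely to restrict $\Pi_v$ to this tractable shape at such places, where the analysis of the unique open $H_{\mathfrak{E}_v}$-orbit in the flag variety delivers the invariant functional directly from the reciprocal pairing of the inducing characters. Assembling the three cases completes the verification.
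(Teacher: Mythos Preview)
Your proposal is correct and follows essentially the same route as the paper: handle $v=w_0$ via condition (3)(c) together with (4)(a), and for the remaining $v\neq v_0,w_0$ use the splitness of $\mathfrak{D}_v$ and the symplecticity of $\Pi_v$, splitting according to whether $\mathfrak{E}_v$ is split or a field. The only difference is in the quadratic case: the paper dispatches it by citing \cite[Theorem 10.4]{MOY}, whereas you sketch the underlying open-orbit/Mackey argument for symplectic Borel principal series; this is the same mechanism, just invoked differently.
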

\begin{proof}
It suffices to consider a place $v\neq v_0,w_0$. Note that $\mathfrak{D}_v$ is split by our choice of $\mathfrak{D}$, and that $\Pi_v=\JL(\Pi_v)=\JL(\Pi)_v$ is symplectic by the discussion at the beginning of Section \ref{sec:conv}. Hence if $\mathfrak{E}_v\simeq \mathfrak{F}_v\times \mathfrak{F}_v$, the result follows from the equality $\gadist(\GL_n(\mathfrak{F}_v))=\symp(\GL_n(\mathfrak{F}_v))$, with $A=\mathfrak{F}_v\times \mathfrak{F}_v$, recalled at the beginning of Section \ref{sec:not}. If not, i.e. if $\mathfrak{E}_v/\mathfrak{F}_v$ is quadratic, then by assumption $\Pi_v$ is a principal series induced from a Borel subgroup. The result then follows from \cite[Theorem 10.4]{MOY}.
\end{proof}

We are now in a position to compute $c(\pi,\psi)$ when $\pi$ is cuspidal symplectic. 

\begin{theorem}\label{thm:key}
Let $\pi\in \sympc(G)$. Then 
\[c(\pi,\psi)=(-1)^{m}\eta_{E/F}(-1)^{n/2}.\]
\end{theorem}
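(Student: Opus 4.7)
My plan is to combine the three ingredients just assembled: the globalization of Lemma \ref{lm:globlz}, the explicit value of $c$ at distinguished unitary representations provided by Corollary \ref{cor:key}, and the product formula of Lemma \ref{lm:product of c is one}. I will choose a global $\Pi$ with $\Pi_{v_0}=\pi$ so that at every other place $v$ the constant $c(\Pi_v,\Psi_v)$ is pinned down by Corollary \ref{cor:key}, and then I will solve for $c(\pi,\psi)$ using the global product relation.

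First I will invoke Lemma \ref{lm:globlz} to globalize the data to $(\Pi,\mathfrak{F},\mathfrak{E},\mathfrak{D},\Psi)$ with $\Pi_{v_0}\simeq\pi$, with $\JL(\Pi)$ cuspidal symplectic, with $\Pi_{w_0}$ cuspidal and $\mathfrak{E}_{w_0}$-distinguished, and with $\mathfrak{D}_v$ split at every $v\neq v_0,w_0$. By Fact \ref{fct}, $\Pi_v$ is $\mathfrak{E}_v$-distinguished at every $v\neq v_0$, and it is unitary as a local component of a cuspidal automorphic representation. I then plan to apply Corollary \ref{cor:key} at each such $v$. At $v=w_0$, where $\mathfrak{D}_{w_0}$ is a division algebra of the same index $d$, the cuspidal case will give
\[c(\Pi_{w_0},\Psi_{w_0})=(-1)^{m}\eta_{\mathfrak{E}_{w_0}/\mathfrak{F}_{w_0}}(-1)^{n/2}.\]
At $v\in S$ with $v\neq v_0,w_0$, the algebra $\mathfrak{D}_v$ is split, so up to Morita equivalence the group is $\GL_n(\mathfrak{F}_v)$ and the matrix rank over the local central division algebra entering Corollary \ref{cor:key} is $n$ rather than $m$. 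Applied to the generic distinguished $\Pi_v$, the split case of the corollary will therefore yield
\[c(\Pi_v,\Psi_v)=(-1)^{n}\eta_{\mathfrak{E}_v/\mathfrak{F}_v}(-1)^{n/2}=\eta_{\mathfrak{E}_v/\mathfrak{F}_v}(-1)^{n/2},\]
the second equality because $n$ is even.

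Plugging these values into the product formula $\prod_{v\in S}c(\Pi_v,\Psi_v)=1$ from Lemma \ref{lm:product of c is one} and using that all factors are $\pm1$, I will obtain
\[c(\pi,\psi)=(-1)^{m}\prod_{v\in S,\,v\neq v_0}\eta_{\mathfrak{E}_v/\mathfrak{F}_v}(-1)^{n/2}.\]
To conclude I will apply the product formula for the adelic quadratic character, $\prod_v\eta_{\mathfrak{E}_v/\mathfrak{F}_v}(-1)=1$. Outside $S$ the extension $\mathfrak{E}_v/\mathfrak{F}_v$ is split or unramified quadratic (and split at Archimedean places), so $\eta_{\mathfrak{E}_v/\mathfrak{F}_v}(-1)=1$ there; hence $\prod_{v\in S,\,v\neq v_0}\eta_{\mathfrak{E}_v/\mathfrak{F}_v}(-1)=\eta_{E/F}(-1)^{-1}=\eta_{E/F}(-1)$. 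Raising to the $n/2$-th power will give $c(\pi,\psi)=(-1)^{m}\eta_{E/F}(-1)^{n/2}$, as claimed. The only delicate point is the sign bookkeeping: the $(-1)^m$ in Corollary \ref{cor:key} refers to the matrix rank of the group over its local central division algebra, which jumps to $n$ (hence to $+1$) at every split place $v\in S\setminus\{v_0,w_0\}$, so the only surviving $(-1)^m$ comes from the place $w_0$.
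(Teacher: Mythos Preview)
Your proof is correct and follows the same route as the paper's: globalize via Lemma \ref{lm:globlz}, use Fact \ref{fct} to make Corollary \ref{cor:key} applicable at every $v\neq v_0$, feed the resulting values into the product formula of Lemma \ref{lm:product of c is one}, and finish with the automorphy relation $\prod_v\eta_{\mathfrak{E}_v/\mathfrak{F}_v}(-1)=1$. Your sign bookkeeping---observing that the local exponent in Corollary \ref{cor:key} is the matrix rank over the local division algebra, hence $n$ (even) at the split places in $S\setminus\{v_0,w_0\}$ and $m$ only at $w_0$---is in fact more explicit than the paper's, which simply records the outcome $(-1)^m$. The one small omission is the justification that $c(\Pi_{v_0},\Psi_{v_0})=c(\pi,\psi)$: the paper handles this by noting that, since $\omega_\pi$ is trivial, $c(\pi,\psi)$ is independent of $\psi$, so one may as well take $\Psi_{v_0}=\psi$.
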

\begin{proof}
We take $\mathfrak{E}/\mathfrak{F}$, $\mathfrak{D}$, and $\Pi$, as in Lemma \ref{lm:globlz}, and $\Psi$ as in Lemma \ref{lm:product of c is one}. Note that since $\pi$ has a trivial central character, from its definition, the constant $c(\pi,\psi)$ does not depend on $\psi$. It is thus permissible to assume that $\Psi_{v_0}=\psi$. We denote by $S$ the set of ramified places as in Lemma \ref{lm:product of c is one}, it is a finite set containing $\{v_0,w_0\}$. By Lemma \ref{lm:product of c is one} and with its notations, we know that \[\prod_v c(\Pi_v,\Psi_v)=1.\] Fact \ref{fct} and Corollary \ref{cor:key} now imply that \[c(\pi,\psi)=(-1)^m\prod_{v\in S-\{v_0\}}\eta_{\mathfrak{E}_v/\mathfrak{F}_v}(-1)^{n/2}=(-1)^m\prod_{v\neq v_0}\eta_{\mathfrak{E}_v/\mathfrak{F}_v}(-1)^{n/2}=(-1)^m\eta_{E/F}(-1)^{n/2},\] since $\eta_{\mathfrak{E}/\mathfrak{F}}(-1)=1$ by automorphy. 
\end{proof}

The cuspidal case of Theorem \ref{conj:PTB}, from which Theorem \ref{conj:PTB} follows as explained in the introduction, is an immediate corollary of the above results.

\begin{corollary}\label{cor:main}
Let $\pi\in\cusp(G)$ and let $\phi=\phi_\pi$ the corresponding $L$-parameter.
The following two conditions are equivalent.
\begin{itemize}
\item[(i)] $\pi$ is $H$-distinguished.
\item[(ii)] $\phi$ takes values in $\Sp_n(\C)$ and satisfies $\e_E(\phi)=(-1)^m$.
\end{itemize}
\end{corollary}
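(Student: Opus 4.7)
The plan is to assemble both directions of the corollary directly from results already established in Sections \ref{sec:gendist}--\ref{sec:conv}; what remains is essentially sign bookkeeping on top of Theorems \ref{thm:key} and \ref{thm:key1}.

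For the implication (i) $\Rightarrow$ (ii), I would first invoke the containment $\gadist(G) \subseteq \symp(G)$ recalled in Section \ref{sec:gendist}, which forces $\phi_\pi$ to take values in $\Sp_n(\C)$ as soon as $\pi$ is $E$-distinguished. The equality $\e_E(\phi_\pi) = (-1)^m$ is then exactly the content of Proposition \ref{prop:direct}.

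For the reverse implication (ii) $\Rightarrow$ (i), I would assume $\phi_\pi$ symplectic (so $\pi \in \sympc(G)$) and $\e_E(\phi_\pi) = (-1)^m$, and compute $\beta_{\pi,E,\psi}(0)$ explicitly in order to invoke Theorem \ref{thm:key1}. By the definition of $\beta_{\pi,E,\psi}$ together with Theorem \ref{thm:almost explicit constant},
\[
\beta_{\pi,E,\psi}(0) = c(\pi,\psi)\,\gamma(1/2, \JL(\pi)_E, \psi_E).
\]
Since $\phi_\pi$ symplectic forces $\omega_\pi$ to be trivial, the cuspidal representation $\pi$ is unitary, so $L(s, \JL(\pi)_E)$ is holomorphic and nonvanishing at $s=1/2$, giving $\gamma(1/2, \JL(\pi)_E, \psi_E) = \epsilon(1/2, \JL(\pi)_E, \psi_E)$. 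Using the second equality in the definition of $\e_E(\phi_\pi)$ and that $\eta_{E/F}(-1) \in \{\pm 1\}$, this epsilon factor equals $\eta_{E/F}(-1)^{n/2}\,\e_E(\phi_\pi)$. Plugging in $c(\pi,\psi) = (-1)^m \eta_{E/F}(-1)^{n/2}$ from Theorem \ref{thm:key} and using that $n$ is even, so $\eta_{E/F}(-1)^n = 1$, I arrive at
\[
\beta_{\pi,E,\psi}(0) = (-1)^m\,\e_E(\phi_\pi),
\]
which is $1$ under hypothesis (ii). Theorem \ref{thm:key1}, whose two alternatives exhaust the symplectic cuspidal situation, then rules out the holomorphic case and forces $J_{\pi,E}(s)$ to have a pole at $s=0$. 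Proposition \ref{prop:key} finally concludes that $\pi$ is $E$-distinguished.

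I do not expect any genuine obstacle at this stage: the real analytic and arithmetic work has already been absorbed into Theorem \ref{thm:key} (through the globalization Lemma \ref{lm:globlz}, Lemma \ref{lm:product of c is one}, and Corollary \ref{cor:key}) and into Theorem \ref{thm:key1} (through Proposition \ref{prop Shahidi} and the pole-order result from \cite{MOY}). The only thing to watch is the sign tracking: making sure the $\eta_{E/F}(-1)^{n/2}$ factor from the definition of $\e_E$ cancels the one from $c(\pi,\psi)$, which is precisely where $n$ being even is used.
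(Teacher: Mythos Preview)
Your proposal is correct and follows the same route as the paper: reduce (ii) $\Rightarrow$ (i) to showing $\beta_{\pi,E,\psi}(0)=1$ via Theorem \ref{thm:key}, then apply Theorem \ref{thm:key1} and Proposition \ref{prop:key}. The paper compresses the sign computation you spell out into the single phrase ``Theorem \ref{thm:key} asserts that $\beta_{\pi,E,\psi}(0)=1$'', tacitly using the same unpacking of $\beta_{\pi,E,\psi}(0)=c(\pi,\psi)\gamma(1/2,\JL(\pi)_E,\psi_E)$ and $\e_E(\phi_\pi)$ that you carry out explicitly; your handling of (i) $\Rightarrow$ (ii) via $\gadist(G)\subseteq\symp(G)$ and Proposition \ref{prop:direct} is exactly what the paper intends when it says ``We only need to prove that (ii) implies (i)''.
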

\begin{proof}
We only need to prove that (ii) implies (i). We assume that $\phi$ takes values in $\Sp_n(\C)$ and satisfies $\e_E(\phi)=(-1)^m$. Then Theorem \ref{thm:key} asserts that $\beta_{\pi,E,\psi}(0)=1$, further Theorem \ref{thm:key1} implies that $J_{\pi,E}(s)$ has a pole at $s=0$, and we conclude thanks to Proposition \ref{prop:key} that $\pi$ is $E$-distinguished. 
\end{proof}

As a concluding remark, we mention that in \cite{MOY}, we considered the global linear and Galois model as well, so the results there should have a local counterpart for cuspidal representations of $G$. They indeed do, and claim results about preservation of such models under both directions of the Jacquet-Langlands correspondence. Since such results are already known, and since the strategy that we explained in detail for the twisted linear model slightly simplifies, we do not go further into the justification of the claim.

\bibliographystyle{amsalpha}
\bibliography{Reference} 

\end{document}